%CORR AV 19/05/15
%CORR FG 03/06/15
\documentclass{amsart}
\usepackage{amssymb,mathrsfs,MnSymbol}%,skak}
\usepackage{color}
\usepackage{enumitem}
%%%%%%%%%%%%%%%%%%%%%%%%%%%%%%%%%%%%%%MACROS

\def\bN {\mathbf{N}}

\def\bR {\mathbf{R}}

\def\fH {\mathfrak{H}}

\def\cC {\mathcal{C}}
\def\cD {\mathcal{D}}

\def\cF {\mathcal{F}}

\def\cH {\mathcal{H}}

\def\cL {\mathcal{L}}

\def\cP {\mathcal{P}}

\def\cS {\mathcal{S}}

\def\a {{\alpha}}
\def\b {{\beta}}

\def\eps {{\epsilon}}

\def\d {{\partial}}
\def\grad {{\nabla}}
\def\Dlt {{\Delta}}

\def\rstr {{\big |}}

%%%%%%%%%%%%%%%%%%%%%%%%%%%%%%%%%%%%COMMANDES

\newcommand{\Tr}{\operatorname{trace}}

\newcommand{\Lip}{\operatorname{Lip}}

\newcommand{\MKd}{\operatorname{dist_{MK,2}}}
\newcommand{\Op}{\operatorname{OP}}

\newcommand{\vp}{\epsilon}

\def\hb {{\hbar}}

\newcommand{\ba}{\begin{aligned}}
\newcommand{\ea}{\end{aligned}}

\newcommand{\be}{\begin{equation}}
\newcommand{\ee}{\end{equation}}

\newcommand{\lb}{\label}

\newtheorem{Thm}{Theorem}[section]
\newtheorem{Rmk}[Thm]{Remark}
\newtheorem{Prop}[Thm]{Proposition}
\newtheorem{Cor}[Thm]{Corollary}
\newtheorem{Lem}[Thm]{Lemma}
\newtheorem{Def}[Thm]{Definition}

\newcommand{\dis}{\mbox{\bf d}}

%%%%%%%%%%%%%%%%%%%%%%%%%%%%%%%%%%%%%%%%%%%%%%%%%%%%%%%%%%%%%%%%%%%%%%%%%%%%%%%%%%%%%%%%%%%%%%%%%%%%%%%%%%%%%%%%%%%%%%%%%

\begin{document}

\title[Quantum Perturbations]{Time dependent Quantum Perturbations uniform in the semiclassical regime}

\author[F. Golse]{Fran\c cois Golse}
\address[F.G.]{CMLS, \'Ecole polytechnique, CNRS, Universit\'e Paris-Saclay , 91128 Palaiseau Cedex, France}
\email{francois.golse@polytechnique.edu}

\author[T. Paul]{Thierry Paul}
\address[T.P.]{Laboratoire J.-L. Lions, Sorbonne Universit\'e \& CNRS, bo\^\i te courrier 187, 75252 Paris Cedex 05, France}
\email{thierry.paul@upmc.fr}

%\hfill to the memory of Arthur Wightmann
\begin{abstract}
%This is an account of a discussion with TP at CMLS on April 27th 2018. Revised on April 16th 2020.
We present a time dependent quantum perturbation result, uniform in the Planck constant, \begin{color}{black}for perturbations of potentials whose gradients are Lipschitz continuous by potentials whose gradients are only bounded a.e.. Though  this  low regularity of the full potential  is not enough to provide the existence of the classical underlying  dynamics, at variance with the quantum one,  our result shows that the classical limit of the perturbed quantum dynamics remains in a tubular neighbourhood of the classical unperturbed one of  size of  order of the square root of the size of the perturbation.\end{color} We treat both Schr\"odinger and von Neumann-Heisenberg equations.
\end{abstract}

\date{\today}

\maketitle

\hfill {\it in memory of {\bf Arthur Wightman}}
\tableofcontents
\section{Introduction}\label{intro}
Perturbation theory has a very special status in Quantum Mechanics. On one side, it is responsible to most of its more spectacular success, from atomic to nuclear physics. On the other side, it has a very peculiar epistemological status: it was while he was working with Max Born \cite{B} on the Bohr-Sommerfeld quantization of celestial perturbations series, as explicitly stated  by Poincar\'e  in his famous ``M\'emoires" \cite{P}, that Heisenberg went to the idea of replacing the commutative algebra of convolution --- corresponding to multiple multiplications of Fourier series appearing in computations on action-angle variables --- by the famous noncommutative algebra of matrices \cite{H}.

After quantum mechanics was truly settled, perturbation theory took a completely different form, in the paradigm of functional analysis ``\`a la Kato" and appeared then  mostly in the framework of the so-called  Rayleigh-Schr\" odinger series. A kind of paradox is that it took a long time to link back the Rayleigh-Schr\" odinger series to the ``original"  formalism of quantization of, say, Birkhoff series \cite{Bi}, though, in the mean time, the latter continued   to be extensively used for applied purpose e.g. in heavy chemical computations.

It seems that Arthur Wightman proposed to several PhD students to work on this problem. One of the difficulty is that, starting with the second term of the Rayleigh-Schr\" odinger expansion, 
$$
E^2_i=\sum\limits_{k}\frac{
%V_{i,k}V_{k,i}
\langle\psi^0_i,V\psi^0_k\rangle\langle\psi^0_k,V\psi^0_i\rangle}{E^0_i-E^0_k},
$$
there appears formally poles at zero in the Planck constant, for example when the unperturbed Hamiltonian is  the harmonic oscillator with unperturbed  eigenvalues $E^0_i=(i+~\tfrac12)\hbar$.  Although this poles disappear at the classical limit $\hbar\to 0$ because the sum $\sum\limits_{k}\frac{
%V_{i,k}V_{k,i}
\langle\psi^0_i,V\psi^0_k\rangle\langle\psi^0_k,V\psi^0_i\rangle}{i-k}$ vanishes in this limit for parity reasons, controlling all the terms of the series remained for years a task considered as unachievable.

To our knowledge, the first proof on the convergence term by term of the Rayleigh-Schr\" odinger expansion to the quantized Birkhoff one, for perturbations of non-resonant  harmonic oscillators, was given in \cite{GP1}, by implementing the perturbation procedure in the so-called Bargman representation (see also \cite{DGH} for an implementation in the framework of the Lie method). The reader interested in this subject can also consult \cite{Dyn,Dyn2} for a proof (also for general non harmonic unperturbed Hamiltonians) in a generalization of \'Ecalle's mould theory and \cite{NPST} for a link between Rayleigh-Schr\" odinger expansion and Hopf algebras.

\vskip 1cm

When one considers time dependent perturbation theory, i.e. comparison between two quantum evolution associated to  two ``close" Hamiltonians $H,H'$, the situation is more difficult. The simple Duhamel formula
$$
e^{-i\frac{tH}\hbar}-e^{-i\frac{tH'}\hbar}
=
\frac1{i\hbar}\int_0^te^{-i\frac{(t-s)H}\hbar}(H'-H)e^{-i\frac{sH'}\hbar}ds
$$
shows clearly that a pole at zero in the Planck constant is again involved. But  to our knowledge,  no combinatorics or normal form can help to remove it in general and one is usually reduced to the trivial estimate
$$
\|e^{-i\frac{tH}\hbar}-e^{-i\frac{tH'}\hbar}\|
\leq
t\frac {\|H'-H\|}{\hbar}
$$
valid for, e.g. any Schatten norm, the operator, Hilbert-Schmidt  or trace norm for example.

\vskip 1cm

In the present paper, we will get rid of this pole in $\hbar$ phenomenon by estimating the difference between two quantum evolutions (in a 
weak topology consisting in tracing against  a set of test observables)
%topology involving More-Kantorovich-Wasserstein metrics) 
in two forms:

- one  linear in the norm of the difference of the Hamiltonians plus a term vanishing with $\hbar$

- the other proportional to  the norm of the difference of the Hamiltonians to the power $1/3$ and independent of $\hbar$.

The proofs of our  results, Sections  \ref{proofmain}, \ref{proffcor} and \ref{proofcrasprop}, will be using the framework of the von Neumann-Heisenberg equation for density operators $D$, 
$$\partial_tD=\tfrac1{i\hbar}[D,H],
$$
 but our results, Theorem  \ref{main1} and Corollary \ref{main2}, will be first presented for pure states, Section \ref{results}, that is when $D=|\psi\rangle\langle\psi|$, in which case it reduces to the usual Schr\"odinger equation (modulo a global phase of the wave function)
$$
i\hbar\partial_t\psi= H\psi.
$$
The mixed states situation will be treated in Section \ref{resultmix}, Theorem \ref{main1mix}.

Our results will need very low regularity of the perturbed potential, namely the boundness of its gradient, and of the unperturbed one, Lipschitz continuity of its gradient. In this situation, the classical  underlying dynamics is well posed for the unperturbed Hamiltonian, but not for the perturbed one. To our knowledge, the classical limit for pure state in this perturbed situation is unknown. We show, in Section \ref{appclass}  Theorem \ref{main3}, that the limit as $\hbar\to 0$ of the Wigner function of the wave function at time $t$ is  close to the one of the initial state pushed forward by the unperturbed classical flow.
\newcommand{\vd}{U}
\newcommand{\vu}{V}
\section{Main result}\label{results}

For $\lambda,\mu\in[0,1]$, let us consider the quantum Hamiltonian 
$$
\cH^{\lambda,\mu}_0=\cH_0:=-\tfrac12\hb^2\Dlt_x+\tfrac\lambda2|x|^2+\mu \vu
$$
on $\fH:=L^2(\bR^d)$. Here $\vu\equiv \vu(x)\in\bR$  such that $\vu\in C^{1,1}(\bR^d)$. For any other real potential $\vd\in W^{1,\infty}(\bR^d)$, we define, for $\eps\in[0,1]$
$$
\cH^{\lambda,\mu}_\eps=\cH_\eps:=-\tfrac12\hb^2\Dlt_x+\tfrac\lambda2|x|^2+\mu \vu+\eps \vd.
$$
Henceforth we denote 
\begin{eqnarray}
\cH&:=&\cH_0^{1,0}=-\tfrac12\hb^2\Dlt_x+\tfrac12|x|^2  \mbox{ \hfill (harmonic oscillator)}\nonumber\\
\cD(\fH)&:=&\{R\in\cL^1(\fH)\text{ s.t. }R=R^*\ge 0\text{ and }\Tr_\fH(R)=1\}\, \mbox{ (density operators)}, 
\nonumber\\
\cD_2(\fH)&:=&\{R\in\cD(\fH)\text{ s.t. }\Tr_\fH(R^{1/2}{\cH} R^{1/2})<\infty\}\,\mbox{ (%density operators with 
finite second moments)}.
\nonumber
\end{eqnarray}
%\vskip 1cm
% Consider the family of  von Neumann equations
%\be\lb{vN12}
%i\hb\d_tR_\eps(t)=[\cH^{\lambda,\mu}_\eps,R_\eps(t)]\,,\qquad R_\eps\rstr_{t=0}=R^{in}\,,
%\ee
%with $R^{in}\in\cD_2(\fH)$. 

For $\psi\in\fH$, we define
\be\label{defdeltapsi}
\Delta(\psi)
%:=
%\sqrt{
%(\psi,\sum_{k=1}^d
%\left((x_k-(\psi,x_k\psi))^2
%+(-i\hbar\nabla_{x_k}-(\psi,-i\hbar\nabla_{x_k}\psi))^2\right)
%\psi)
%}
:=\sqrt{\left(\psi,(x-(\psi,x\psi))\psi)^2+(-i\hbar\nabla_x-(\psi,-i\hbar\nabla_x\psi))^2\psi\right)}
\ee
Note that the Heisenberg inequalities 
$$
\sqrt{
(\psi,
(x_k-(\psi,x_k\psi))^2,\psi)}
\sqrt{(\psi,(-i\hbar\nabla_{x_k}-(\psi,-i\hbar\nabla_{x_k}\psi))^2
\psi)}\geq\hbar/2,\ k=1,\dots,d
,
$$
imply that
$$
\Delta(\psi)\geq\sqrt{2d\hbar}.
$$
%\Delta(R):=\sqrt{\Tr{\big(R^\frac12(x-\Tr{(R^\frac12 xR^\frac12)})^2R^\frac12\big)}+
%\Tr{\big(R\frac12(-i\nabla_x-\Tr{(R^\frac12 (-i\nabla_x)R^\frac12)})^2R^\frac12\big)}

On $\cD(\fH)$ we define the following distance 
\be\label{defd}
%\mbox{\bf d}
\dis(R,S):=
\sup_{\substack{
%F\in\cD_{HS}(\fH)\\
\max\limits_{\substack{|\a|,|\b| \le 2[\frac d4]+3}}\|\mathcal D^\a_{-i\hbar\nabla}\mathcal D^\b_{x}F\|_{1}\leq 1}}|\Tr{(F(R-S))}|\,,
\ee
where $\cD_A=\tfrac1{i\hbar}[A,\cdot]$ for each (possibly unbounded) self-adjoint operator $A$ on $\fH$ and $\|.\|_1$ is the trace norm on $\cD(\fH)$.
The fact that $\dis$ is a distance has been proved in \cite[Appendix A]{FGJinPaul}.

%By abuse of notation, we will call 
Abusing the notation, for $\phi,\psi\in\mathfrak{H}$, we set
for $\psi,\varphi\in\fH$
$$
\dis(\psi,\varphi):=\dis(|\psi\rangle\langle\psi|,|\varphi\rangle\langle\varphi|)
$$
\vskip 1cm
Consider the family of Schr\"odinger equations, for $\eps\in[0,1]$,
\be\lb{vNeps}
i\hb\d_t\psi_\eps(t)=\cH
%^{\lambda,\mu}
_\eps\psi_\eps(t),\,\qquad \psi_\eps(0)=\psi^{in}_\eps\in
%\fH.
H^2(\bR^d)\,.
\ee
%with $R^{in}\in\cD_2(\fH)$. 

%Let $\psi^{in}_\eps$ satisfy the  following hypothesis:
%\be\label{idelta} 
%\Delta(\psi^{in})=O(\sqrt{\hbar}).
%\ee 
\begin{Thm}\label{main1}
Let $\psi^{in}_\eps$ satisfy the  following hypothesis:
\be\label{idelta} 
\Delta(\psi^{in})=O(\sqrt{\hbar}).
\ee Then, for every $t$, 
%there exists a constant $C(t)$ independent of $\hbar$ such that
$$
\dis(\psi_0(t),\psi_\eps(t))^2\leq C(t)\epsilon+D(t)\hbar,
$$
where $C(t),D(t)$, given by \eqref{cdet},\eqref{ddet}, satisfy
\begin{eqnarray}
C(t)&=&\frac{e^{|t|(1-\lambda+\mu\Lip(\nabla \vu))}-1}{1-\lambda+\mu\Lip(\nabla \vu)}
C_{
%\|\cH R^{in}\|_1
(\psi_\eps^{in},\cH\psi_\eps^{in}),\|\vu\|_\infty,\|\vd\|_\infty,\|\nabla \vd\|_\infty}<\infty\nonumber\\
D(t)&=&e^{|t|(1-\lambda+\mu\Lip(\nabla \vu))}
D_d
%\|\cH_0^{1,0}R^{in}\|_1,\|\vu\|_\infty,\|\vd\|_\infty,\|\nabla V2\|_\infty)
<\infty\nonumber
\end{eqnarray}
\end{Thm}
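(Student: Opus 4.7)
The strategy is a quantum optimal-transport / coupling argument in the spirit of Golse--Mouhot--Paul, adapted here to pure states.

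\textbf{Step 1 (pseudo-distance).} I would introduce on $\cD(\fH)$ a quantum Wasserstein-type pseudo-metric
\[
\cW_\hb(R,S)^2 := \inf_{\Pi}\;\Tr_{\fH\otimes\fH}\Bigl[\bigl(|x_1-x_2|^2+|p_1-p_2|^2\bigr)\Pi\Bigr],
\]
where $x_k,p_k := -i\hb\nabla_k$ act on the $k$-th factor of $\fH\otimes\fH$ and the infimum is over couplings $\Pi\in\cD(\fH\otimes\fH)$ with marginals $R$ and $S$. The Heisenberg inequalities force $\cW_\hb(R,R)^2\ge 2d\hb$, while the assumption $\Delta(\psi^{in}) = O(\sqrt\hb)$ provides the matching upper bound $\cW_\hb(|\psi^{in}\rangle\langle\psi^{in}|,|\psi^{in}\rangle\langle\psi^{in}|)^2 = O(\hb)$, realised by a nearly-optimal coupling $\Pi^{in}$ (e.g.\ $|\psi^{in}\rangle\langle\psi^{in}|\otimes|\psi^{in}\rangle\langle\psi^{in}|$, whose cost equals $2\Delta(\psi^{in})^2$). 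A duality step---writing $\Tr(F(R-S))=\Tr((F\otimes I - I\otimes F)\Pi)$ and majorising $F\otimes I - I\otimes F$ by the transport cost through the quantum derivatives $\cD_xF,\cD_{-i\hb\nabla}F$---yields $\dis(R,S)^2\le C_d\,\cW_\hb(R,S)^2$; the exponent $2[d/4]+3$ in the definition of $\dis$ is exactly what a Calder\'on--Vaillancourt / quantum-Sobolev embedding requires in order to pass from trace-norm control of high-order quantum derivatives of $F$ to the pointwise control of the symbol needed for this majoration.

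\textbf{Step 2 (coupling, differential inequality, Gronwall).} I would propagate $\Pi^{in}$ by the product unitary group,
\[
\Pi(t) := \bigl(e^{-it\cH_0/\hb}\otimes e^{-it\cH_\eps/\hb}\bigr)\,\Pi^{in}\,\bigl(e^{it\cH_0/\hb}\otimes e^{it\cH_\eps/\hb}\bigr),
\]
whose marginals are automatically $|\psi_0(t)\rangle\langle\psi_0(t)|$ and $|\psi_\eps(t)\rangle\langle\psi_\eps(t)|$; hence $\cW_\hb(\ldots)^2 \le \cE(t):=\Tr[(|x_1-x_2|^2+|p_1-p_2|^2)\Pi(t)]$. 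Differentiating, the commutator of the quadratic cost with $\cH_0\otimes I + I\otimes\cH_\eps$ produces three contributions: $2(1-\l)(x_1-x_2)\cdot(p_1-p_2)$ from the kinetic and harmonic parts (the cancellation between $\{|x|^2,\cdot\}$ and $\{|p|^2,\cdot\}$ is the origin of the factor $1-\l$), $2\mu(\nabla\vu(x_1)-\nabla\vu(x_2))\cdot(p_1-p_2)$ from the smooth potential, and $-2\eps\nabla\vd(x_2)\cdot(p_1-p_2)$ from the perturbation. Cauchy--Schwarz on the first two and $2ab\le a^2+b^2$ bounds them by $(1-\l+\mu\Lip(\nabla\vu))\,\cE(t)$; the third is bounded by
\[
2\eps\|\nabla\vd\|_\infty\sqrt{\Tr(|p_1-p_2|^2\Pi(t))}\le 2\eps\|\nabla\vd\|_\infty\sqrt{4(\psi^{in},\cH\psi^{in}) + O(\|\vu\|_\infty+\|\vd\|_\infty)},
\]
via the a priori energy estimate $\Tr(|p|^2|\psi_\eps(t)\rangle\langle\psi_\eps(t)|)\le 2(\psi^{in},\cH\psi^{in})+O(\|\vu\|_\infty+\|\vd\|_\infty)$. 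This yields a source term $K\eps$ with $K$ depending only on $\|\nabla\vd\|_\infty,\|\vu\|_\infty,\|\vd\|_\infty,(\psi^{in},\cH\psi^{in})$. Integrating the resulting Gronwall inequality $\dot\cE\le (1-\l+\mu\Lip(\nabla\vu))\cE + K\eps$ and using $\cE(0)=O(\hb)$ gives exactly the announced form $\cE(t)\le D(t)\hb + C(t)\eps$; combined with Step 1 this closes the proof.

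\textbf{Main obstacle.} The principal difficulty is the weak regularity $\vd\in W^{1,\infty}$: no classical flow for $\cH_\eps$ is available, so no Egorov-type comparison is possible. The transport framework is designed precisely for this situation because the derivative of $\cE(t)$ exposes $\nabla\vd$ only via its $L^\infty$ norm, never through a Lipschitz constant. It also explains why the decay rate in $\eps$ is only $\sqrt{\eps}$ (and not $\eps$): Cauchy--Schwarz on the perturbation term is forced by the fact that $\nabla\vd$ is merely bounded, and its square-root cost is then transferred to $\dis^2$ being linear in $\eps$. A secondary, algebraically subtler point is the Sobolev reduction $\dis^2\lesssim\cW_\hb^2$: pinning down the sharp number $2[d/4]+3$ of quantum derivatives of $F$ needed to control $F\otimes I - I\otimes F$ by the transport cost is where the dimension-dependent Calder\'on--Vaillancourt threshold enters; the remaining ingredients (unitarity of the propagation, preservation of the marginals, energy a priori bound) are routine consequences of $\vu\in C^{1,1}$, $\vd\in W^{1,\infty}$ and well-posedness of the Schr\"odinger flow on $H^2(\bR^d)$.
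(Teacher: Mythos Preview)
Your proposal is correct and follows essentially the same route as the paper: the quantum Wasserstein pseudo-distance $MK_\hb$ via couplings, propagation of the coupling by the tensor-product unitary group, the Gronwall argument on the transport cost with the source term controlled by $\eps\|\nabla U\|_\infty$ and conserved energy, and finally the comparison $\dis\lesssim MK_\hb$ together with $MK_\hb(|\psi^{in}\rangle\langle\psi^{in}|,|\psi^{in}\rangle\langle\psi^{in}|)^2\le 2\Delta(\psi^{in})^2=O(\hb)$ via the trivial coupling. The only cosmetic difference is that the paper states the comparison as $\dis\le 2^d(MK_\hb+C_d\hb)$ (the additive $\hb$ coming from Calder\'on--Vaillancourt), whereas you write $\dis^2\le C_d\,\cW_\hb^2$; these are equivalent since $MK_\hb^2\ge 2d\hb$ always, so the additive $\hb$ can be absorbed into the multiplicative constant.
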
 
%Note that when $\lambda=1, \ \Lip(\nabla V)=0$ (perturbation of the harmonic oscillator), $C(t)$ increases linearly in time and $D(t)$ is independent of time.

The following result gives an upper bound independent of $\hbar$.

\begin{Cor}\label{main2}
Under the same assumptions as in Theorem \ref{main1}, 
$$
\dis(\psi_0(t),\psi_\eps(t))\leq E(t)\epsilon^{\frac13},
$$
with
$$
E(t)=\min{(\sqrt{C(t)+D(t}),2|t|\|\vd\|^2_\infty)}.
$$
\end{Cor}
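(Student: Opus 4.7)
The plan is to combine the bound of Theorem \ref{main1}, which degrades as $\hbar$ increases, with a complementary crude estimate derived directly from the Schrödinger equation, which degrades as $\hbar$ decreases, and then to interpolate. The two bounds cross at $\hbar\sim\eps^{2/3}$, producing an $\eps^{1/3}$ rate uniform in $\hbar$.

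First I would derive the crude Duhamel/Gronwall bound. Setting $\phi(t):=\psi_0(t)-\psi_\eps(t)$, equation \eqref{vNeps} gives
$$
i\hbar\partial_t\phi=\cH_0\phi-\eps\vd\,\psi_\eps,\qquad \phi(0)=0.
$$
Taking the real part of $\langle\phi,\partial_t\phi\rangle$ kills the self-adjoint contribution of $\cH_0$ and leaves
$$
\Bigl|\tfrac{d}{dt}\|\phi(t)\|^2\Bigr|\le\tfrac{2\eps}{\hbar}\,|\langle\phi(t),\vd\,\psi_\eps(t)\rangle|\le\tfrac{2\eps\|\vd\|_\infty}{\hbar}\,\|\phi(t)\|,
$$
hence $\|\phi(t)\|\le|t|\eps\|\vd\|_\infty/\hbar$. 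Every admissible test operator $F$ in \eqref{defd} satisfies $\|F\|_{\mathrm{op}}\le\|F\|_1\le 1$, and for pure states $\bigl\||\psi_0\rangle\langle\psi_0|-|\psi_\eps\rangle\langle\psi_\eps|\bigr\|_1\le 2\|\phi(t)\|$. Combining,
$$
\dis(\psi_0(t),\psi_\eps(t))\le\frac{2|t|\eps\|\vd\|_\infty}{\hbar}.
$$

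I would then split at the threshold $\hbar=\eps^{2/3}$. If $\hbar\le\eps^{2/3}$, Theorem \ref{main1} together with $\eps\le\eps^{2/3}$ (recall $\eps\in[0,1]$) gives
$$
\dis(\psi_0(t),\psi_\eps(t))^2\le C(t)\eps+D(t)\hbar\le(C(t)+D(t))\,\eps^{2/3},
$$
so $\dis\le\sqrt{C(t)+D(t)}\,\eps^{1/3}$. If $\hbar>\eps^{2/3}$, the crude bound above yields $\dis\le 2|t|\|\vd\|_\infty\,\eps^{1/3}$. In either regime the bound takes the announced form $E(t)\,\eps^{1/3}$ with $E(t)$ independent of $\hbar$.

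The substantive content has already been absorbed into Theorem \ref{main1}; what remains is an elementary interpolation, so there is no real obstacle. The only mildly delicate point is the observation that the test class in \eqref{defd}, defined through trace-norm bounds on the symbol and its iterated commutators with position and momentum, automatically controls the operator norm of $F$—which is what allows the $L^2$ estimate on $\phi$ to be converted into an estimate on $\dis$.
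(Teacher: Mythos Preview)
Your proof is correct and essentially the same as the paper's: obtain a crude bound $\dis\le 2|t|\eps\|U\|_\infty/\hbar$ (the paper does this via Duhamel on the von Neumann equation together with its Lemma~\ref{lema7}, you via an $L^2$ energy estimate on the Schr\"odinger equation plus the direct observation $\|F\|_{\mathrm{op}}\le\|F\|_1\le1$), then interpolate with Theorem~\ref{main1} by splitting at $\hbar=\eps^{2/3}$. Your route to the crude bound is in fact slightly cleaner, since it avoids the extraneous $2^d$ factor from Lemma~\ref{lema7} that the paper then silently drops.
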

The function $z\mapsto\frac{e^z-1}z$ is extended by continuity at $z=0$, so that, when $\lambda=1, \ \Lip(\nabla V)=0$ (perturbation of the harmonic oscillator), $C(t)$ increases linearly in time and $D(t)$ is independent of time and $C(t),E(t)$ increase linearly in time.

\begin{Rmk}
Other choices than the hypothesis 
%$\dis(\psi_0(t),\psi_\eps(t))^2\leq C(t)\epsilon+D(t)\hbar$ 
\ref{idelta} are possible, that we didn't mention for sake of clarity of the main statements. For example

\ref{idelta}'\centerline{  $\Delta(\psi^{in})=O(\sqrt\eps)$: }
 in this case the statement of both Theorem \ref{main1} and Corollary \ref{main2} remain the same with a slight chance of the constants $C(t),D(t)$.
 
 \ref{idelta}''\centerline{  $\Delta(\psi^{in})=O(\hbar^\alpha),\ 0\leq\alpha<1/2$: }
 in this case the statement of both Theorem \ref{main1} and Corollary \ref{main2} become
 \begin{eqnarray}
\dis(\psi_0(t),\psi_\eps(t))^2&\leq& C(t)\epsilon+D'(t)\hbar^\alpha\nonumber\\
\dis(\psi_0(t),\psi_\eps(t))&\leq& E'(t)\epsilon^{\frac\alpha{\alpha+2}}\nonumber
\end{eqnarray}
for constants $C(t),D(t),E(t)$ easily computable from the proofs of Section \ref{proofmain}.
\end{Rmk}
\vskip 1cm
Let us finish this section by some topological remarks, inspired by \cite[Section 4]{gpcoh}. The distance $\dis$ defines a weak topology, very different a priori of the usual strong topologies associated to Hilbert spaces in quantum mechanics. Nevertheless, it seems to us better adapted to the semiclassical approximation for the following reason.

Let us consider two coherent states pinned up at two points $z_1=(p_1,q_1),z_2=(p_2,q_2)$ of the phase-space $T^*\bR^d$: $\psi_{z_j}(x)=(\pi\hbar)^{-d/4}e^{-i\tfrac{p_j.x}\hb}e^{-\tfrac{(x-q_j)^2}{2\hbar}},\ j=1,2$.

%\noindent 
An easy computation shows that
$$
\|\psi_{z_1}-\psi_{z_2}\|^2_{L^2(\bR^d)}
=
\||\psi_{z_1}\rangle\langle\psi_{z_1}|
-
|\psi_{z_2}\rangle\langle\psi_{z_2}|
|_{Hilbert-Schmidt}^2
=
1-e^{-\frac{|z_1-z_2|^2}{2\hbar}}
$$
so that, as $\hbar\to 0$,
\begin{eqnarray}
\|\psi_{z_1}-\psi_{z_2}\|^2_{L^2(\bR^d)}
=
\||\psi_{z_1}\rangle\langle\psi_{z_1}|
-
|\psi_{z_2}\rangle\langle\psi_{z_2}|
|_{Hilbert-Schmidt}^2
%&\to&1\mbox{ if }z_1\neq z_2\nonumber\\
&=& 0\ \ \mbox{ if } z_1= z_2\nonumber\\
&\to&1\ \ \  \forall\ z_1\neq z_2.\nonumber
\end{eqnarray}
In other words, the Lebesgue or Schatten norms behave for small values of $\hbar$ as the discrete topology, the one which only discriminates points.

On the contrary, $\dis$ is much more sensitive to the localization on phase space as shows our next result, proven in Section \ref{proofcrasprop} below.

\begin{Prop}\label{crasprop}
%There exists $C>0$ such that, for any $z_1,z_2\in\bR^{2d}$,
For any bounded convex domain $\Omega\subset\bR^{2d}$, there exists $C_\Omega>0$ such that, for any $z_1,z_2\in\Omega$,
$$
C_\Omega|z_1-z_2|-\hbar\leq2^d\dis(\psi_{z_1},\psi_{z_2})
\leq
2^d\sqrt{|z_1-z_2|^2+2d\hbar}+C_d\hbar,
$$
where $C_d$ is defined in Lemma \ref{lemtheor}  Section \ref{proofmain} below.
\end{Prop}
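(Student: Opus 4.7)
The proposition has two independent parts which I treat separately.

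\textbf{Upper bound.} The plan is to reduce the bound to the construction of an explicit quantum coupling of the two coherent states and then invoke Lemma \ref{lemtheor}. That lemma should be read as the statement that $2^d\dis(R,S)$ is controlled by a quantum quadratic transport cost up to $O(\hbar)$: for every $Q\in\cD(\fH\otimes\fH)$ with marginals $R$ and $S$,
$$
2^d\dis(R,S)\;\le\;2^d\sqrt{\Tr\bigl((\hat x\otimes I-I\otimes\hat x)^2Q+(-i\hbar\nabla\otimes I-I\otimes(-i\hbar\nabla))^2Q\bigr)}+C_d\hbar.
$$
Applied with the pure tensor product $Q:=|\psi_{z_1}\otimes\psi_{z_2}\rangle\langle\psi_{z_1}\otimes\psi_{z_2}|$, whose partial traces are manifestly $|\psi_{z_1}\rangle\langle\psi_{z_1}|$ and $|\psi_{z_2}\rangle\langle\psi_{z_2}|$, a short calculation using $\langle\psi_{z_j},\hat x\psi_{z_j}\rangle=q_j$, $\langle\psi_{z_j},-i\hbar\nabla\psi_{z_j}\rangle=p_j$ together with the Heisenberg-saturating variances $\Tr((\hat x-q_j)^2\Pi_{z_j})=d\hbar/2=\Tr((-i\hbar\nabla-p_j)^2\Pi_{z_j})$ gives $|q_1-q_2|^2+d\hbar$ for the first trace and $|p_1-p_2|^2+d\hbar$ for the second, summing to $|z_1-z_2|^2+2d\hbar$. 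Plugging in finishes this half.

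\textbf{Lower bound.} Here I would exhibit an explicit admissible $F$ distinguishing the two coherent states. A natural candidate is an anti-Wick (Toeplitz) observable
$$
F:=N_\Omega^{-1}\int_{\bR^{2d}}a(z)\,|\psi_z\rangle\langle\psi_z|\,\frac{dz}{(2\pi\hbar)^d},
$$
whose symbol $a(z)$ agrees on a neighbourhood of $\Omega$ with the linear function $e\cdot z$, where $e:=(z_1-z_2)/|z_1-z_2|$, and is smoothly truncated outside a fixed bounded enlargement of $\Omega$. The commutation identities $\mathcal D_{\hat x_k}\text{(anti-Wick)}(a)=\text{anti-Wick}(\partial_{p_k}a)$ and $\mathcal D_{-i\hbar\nabla_k}\text{(anti-Wick)}(a)=-\text{anti-Wick}(\partial_{q_k}a)$, combined with the estimate $\|\text{anti-Wick}(b)\|_1\le(2\pi\hbar)^{-d}\|b\|_{L^1}$, convert the admissibility constraints $\|\mathcal D^\alpha\mathcal D^\beta F\|_1\le 1$ into finitely many $L^1$-bounds on $a$ and its derivatives that hold after choosing $N_\Omega$ depending only on the geometry of $\Omega$ (and the dimension). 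Computing $\langle\psi_{z_j},F\psi_{z_j}\rangle=N_\Omega^{-1}(a\ast G_\hbar)(z_j)/(2\pi\hbar)^d$, where $G_\hbar$ is the phase-space Gaussian of variance $\hbar/2$, and using convexity of $\Omega$ (so that the segment from $z_2$ to $z_1$ remains inside the region where $a(z)=e\cdot z$), one obtains $\langle\psi_{z_j},F\psi_{z_j}\rangle=\alpha(\Omega)\,e\cdot z_j+O(\hbar)$, hence $2^d|\Tr(F(\rho_1-\rho_2))|\ge C_\Omega|z_1-z_2|-\hbar$.

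\textbf{Main obstacle.} The upper bound is essentially bookkeeping once Lemma \ref{lemtheor} is in hand. The real work is in the lower bound: one must build a single observable $F$ that simultaneously satisfies \emph{all} the trace-norm constraints on $\mathcal D^\alpha_{-i\hbar\nabla}\mathcal D^\beta_xF$ (up to the order $2[d/4]+3$ in each argument, which forces even $\|F\|_1\le 1$) while keeping the expectation gap of order $|z_1-z_2|$. Naive candidates — a single coherent state projector, or a Toeplitz operator with linear symbol on all of phase space — introduce powers of $\hbar$ that destroy the linear scaling in $|z_1-z_2|$. Tracking how $N_\Omega$ absorbs each derivative norm while preserving a non-degenerate amplitude $\alpha(\Omega)>0$, uniform in $\hbar$, is the delicate part; the $-\hbar$ error on the right-hand side then comes precisely from replacing $(a\ast G_\hbar)$ by $a$ in the Husimi calculation.
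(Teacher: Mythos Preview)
Your upper bound is correct and matches the paper's argument exactly: the paper invokes Lemma \ref{lemtheor} together with $MK_\hb(|\psi_{z_1}\rangle\langle\psi_{z_1}|,|\psi_{z_2}\rangle\langle\psi_{z_2}|)^2\le|z_1-z_2|^2+2d\hb$ (quoted from \cite{gpcoh}), and your tensor-product coupling is precisely how that $MK_\hb$ inequality is obtained.

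For the lower bound your T\"oplitz ansatz with a truncated linear symbol is the same idea as the paper's (which uses a convex Schwartz symbol and then rotates and translates to align its gradient with $z_1-z_2$). But there is a genuine gap in the normalisation, and the paper's own argument shares it. You correctly write $\|\text{anti-Wick}(b)\|_1\le(2\pi\hb)^{-d}\|b\|_{L^1}$; this is \emph{sharp} for $b\ge 0$ since it equals the trace. The zeroth-order admissibility constraint $\|F\|_1\le 1$ (which, as you yourself note, is among the conditions) therefore forces $N_\Omega\ge(2\pi\hb)^{-d}\|a\|_{L^1}$, contradicting your claim that $N_\Omega$ depends only on the geometry of $\Omega$. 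With $N_\Omega$ of order $\hb^{-d}$ the Husimi difference becomes $N_\Omega^{-1}|z_1-z_2|\sim\hb^{d}|z_1-z_2|$ rather than $|z_1-z_2|$. Sign cancellations in $a$ do not rescue this: even for a mean-zero symbol the Hilbert--Schmidt norm of the anti-Wick operator is of order $\hb^{-d/2}\|a\|_{L^2}$, so its trace norm still diverges as $\hb\to 0$. The paper hides exactly the same issue by inserting a factor $(2\pi\hb)^{-d}$ in the Husimi line $|\int f(z)(\dots)\,dqdp/(2\pi\hb)^d|$ that is inconsistent with its own normalisation $F=\Op^T_\hb[(2\pi\hb)^df]$ and its claim $\|F\|_1\le\|f\|_{L^1}$.
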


\section{Applications to the classical limit}\label{appclass}
The
estimates provided by the results of the two preceding sections \begin{color}{black}do not require $\grad \vd$  to be 
%Lipschitz 
continuous --- in other words, the classical dynamics underlying the quantum dynamics generated by $\cH_\eps,\ \eps>0$,  fails to satisfy the assumptions of the 
%Cauchy-Lipschitz 
Cauchy-Peano-Arzel\`a
Theorem.\end{color}
 
Let us recall that one way to look at the transition from quantum to classical dynamics as $\hbar\to 0$ is to associate to a quantum (pure or mixed) state, namely a 
positive trace one operator $R^\hb$ on $\fH$ (density operator) with integral kernel $r^\hb(x,x')$, e.g. a pure state $R^\hb=|\psi^\hb\rangle\langle\psi^\hb|$, for any vector $\psi^\hb$ in $\fH$ the 
 so-called Wigner transform defined on phase-space by (with a slight abuse of notation again)
\begin{eqnarray}\label{defwwig}
W_\hb[R^\hb](x,\xi)&:=&\frac1{(2\pi)^d}\int_{\bR^d}e^{-i\xi\cdot y}r^\hb(x+\tfrac12\hb y,x-\tfrac12\hb y)dy\,\\
W_\hb[\psi^\hb](x,\xi)&:=&
\frac1{(2\pi)^d}\int_{\bR^d}e^{-i\xi\cdot y}{\psi^\hb(x+\tfrac12\hb y)}\overline{\psi_\eps(x-\tfrac12\hb y)}dy\,.
%\mbox{ (pure state case).}
\label{defwigps}
\end{eqnarray}
An easy computation shows that $W_\hb[\psi^\hb]$ is linked to $\psi^\hb$ by the two following marginal properties
\begin{eqnarray}
\int_{\bR^d} W_\hb[\psi^\hb](x,\xi)d\xi&=&|\psi^\hb(x)|^2\label{margx}\\
\int_{\bR^d} W_\hb[\psi^\hb](x,\xi)dx&=&|\widehat\psi^\hb(p)|^2,\ \widehat\psi^\hb(p):=\int_{\bR^d}e^{-ip\cdot x/\hbar}\psi^\hb(x)\tfrac{dx}{(2\pi\hbar)^{d/2}}\label{margxi}
\end{eqnarray}

It has been proved, see e.g. \cite{lionspaul}, that, under the tightness conditions
\begin{eqnarray}
\lim_{R \to +\infty} \sup_{\hb\in (0,1)} \int_{\bR^d \setminus B_{R}^{(d)}} r^\hb(x,x) \; dx &=&0,\label{tight1}\\
\lim_{R \to +\infty} \sup_{\hb\in(0,1)} \frac{1}{(2\pi\hb)^d} \int_{\bR^d \setminus B_{R}^{(d)}} \mathcal Fr^\hb\Bigl(\frac{p}{\hb},\frac{p}{\hb}\Bigr) \; dp &=&0,\label{tight2}
\end{eqnarray}
where $B_{R}^{(d)}$ is the ball of radius $R$ in $\bR^d$ and $\mathcal F$ is the Fourier transform on $\bR^{2d}$,
the family of Wigner  functions $W_\hb[R^\hb]$ converges weakly, in particular in $\cS'(\bR^{2d})$, after extraction of a subsequence of values of $\hbar$, to $W_0\in\cP(\bR^{2d})$, the space of probability measures on $\bR^{2d}$. The measure $W_0$ is called the Wigner measure of the family $R_\hb$. 

 As it is quite standard, we will omit to mention the extraction of subsequences, together with the explicit dependence of states in the Planck constant, and we will just write, when it does not create any confusion,
$$
\lim_{\hbar\to 0}W_\hb[R]=W_0.
$$

Note that when $R=|\psi\rangle\langle\psi|$ is a pure state,  \eqref{tight1}-\eqref{tight2} reads
\begin{eqnarray}
\lim_{R \to +\infty} \sup_{\hb\in (0,1)} \int_{\bR^d \setminus B_{R}^{(d)}} |\psi(x)|^2 \; dx &=&0,\label{tight1ps}\\
\lim_{R \to +\infty} \sup_{\hb\in(0,1)} 
%\frac{1}{(2\pi\hb)^d}
 \int_{\bR^d \setminus B_{R}^{(d)}} 
 %\mathcal Fr_{\hb}\Bigl(\frac{p}{\hb},\frac{p}{\hb}\Bigr) \; 
|\widehat\psi(p)|^2 
 \; dp &=&0\label{tight2ps}
\end{eqnarray}

Considering the quantum Hamiltonian $\cH_\eps$, the expected underlying classical dynamics is the one driven by the Liouville equation 
%on $\cP(R^{2d})$ defined by
\be\label{liouveps}
\partial_t\rho=\{\tfrac12{(p^2+\lambda q^2)}+\mu \vu(q)+\eps\vd,\rho\},\ \rho^t|_{t=0}=\rho^{in}
\ee
 where $\{.,.\}$ is the Poisson bracket on the symplectic manifold $T^*\bR^d\sim\bR^{2d}$.

When $\eps=0$, the Hamiltonian vector field of Hamiltonian $\tfrac{p^2+\lambda q^2}2+\mu \vu(q)$ is Lipschitz continuous.
%, therefore it provides a unique flow $\Phi_0^t$, \eqref{liouveps} is well posed on $\cP(\bR^{2d})$ and, for any $T\geq 0$, there is a unique solution in 
%$C_0([0,T];\cP(\bR^{2d}))$. 
Moreover, it was proven, \cite{lionspaul}, that $R_\hb(t)$ is tight for any $t\in\bR$ and $W_0^t:=\lim\limits_{\hbar\to 0}W_\hb[R_\hb(t)]=\rho^t$ solving \eqref{liouveps} with $\rho^{in}=W_0$.

\begin{color}{black}When $\eps>0$, the Liouville equation \eqref{liouveps} exits the 
Cauchy-Peano-Arzel\`a
%Lipschitz 
category: the associated Hamiltonian vector field might fail to have a  characteristic  through every point of the phase-space.\end{color}  Nevertheless, as shown in \cite[Theorem 6.1]{AFFGP}, \eqref{liouveps} is still well posed in $L^{\infty}_+([0,T];L^{1}(\bR^{2d})\cap L^{\infty}(\bR^{2d}))$, and it was proven in \cite{filipa} (after \cite{AFFGP}), that the Wigner function $W_\hb[R_\hb(t)]$ of the solution of the von Neumann equation
\be\label{primovne}
i\hb\d_tR_\eps(t)=[\cH
%^{\lambda,\mu}
_\eps,R_\eps(t)]\,,\qquad R_\eps(0)=R^{in}_\eps
\ee
tends weakly
%(in some sense) 
  to the solution of \eqref{liouveps}, under certain conditions on $R_\eps^{in}$.

Unfortunately, these conditions exclude definitively pure states, as, for example,  one of them impose that $\|R_\eps^{in}\|=O(\hbar^d)$ and, to our knowledge, nothing is known  concerning the dynamics of the (possible) limit of $W_\hb[\psi_\eps(t)]$ as $\hbar\to 0$ where $\psi_\eps(t)$ solves the Schr\"odinger equation \eqref{primovne}. 

Our next result will show that such a limit remains $\sqrt\eps$-close to the push-forward of the Wigner measure of the initial condition by the flow of the unperturbed classical Hamiltonian.

\begin{Thm}\label{main3}
Let $R_\eps(t)$ be the solution of the von Neumann equation \eqref{primovne} with $R_\eps^{in}$ satisfying
$$
\Delta(R_\eps^{in})=O(\sqrt\hb).
$$ 
Let $R_\eps^{in}$ be tight, in the sense that it satisfies \eqref{tight1}-\eqref{tight2}, so that $W_\hb[R_\eps^{in}]\to W_0^{in}$, its Wigner measure, as $\hbar\to 0$.

Then, for any $t\in\bR$,
%\item 
the family $R_\eps(t)$ is tight, so that 
$$
W_\hb[R_\eps(t)]
\to W_0(t)\in\cP(\bR^{2d})
%\mbox{ in }\cS'(\bR^{2d}) 
\mbox{ as }\hbar\to 0.
$$
Moreover,
\begin{enumerate}
\item
$W_0(t)$ is $\sqrt\eps$-close to $\Phi^t\#W_0^{in}$, where $\Phi^t$ is the 
%Hamiltonian 
flow of Hamiltonian 
$$\tfrac{p^2+(1-\lambda )q^2}2+\mu\vu(q),
$$
 in the sense that
$$
\sup_{\substack{\int_{\bR^d}\sup\limits_{q\in\bR^d}|
%\int_{\bR^d}e^{-iz.\xi}f(x,\xi)d\xi
\cF_pf(q,z)|dz\leq 1\\
%f\\
\max\limits_{\substack{|\a|+|\b|\\\leq 2[d/4]+3}}\|\d^\a_q\d^\b_pf\|_{L^\infty(\bR^{2d})}\leq 1
%\\|\a|+|\b|\leq 2[d/4]+3
}}
|\int_{\bR^{2d}}f(q,p)\left(W_0(t)-\Phi^t\#W_0^{in}\right)(q,p)dqdp|
\leq 2^{-d}\sqrt{C(t)}\sqrt\eps,
$$
where  $\cF_pf(q,z):=\int_{\bR^d}e^{-ip.z}f(q,p)dp$ and $C(t)$ is as in Theorem \ref{main1} after replacing $(\psi_\eps^{in},\cH\psi_\eps^{in})$ by $\int_{\bR^{2d}}(\tfrac{p^2+q^2}2)W_0(dpdq)$.
\ \\
\item
%\begin{eqnarray}
%\left(   \right)
%\end{eqnarray}
%the limit $t\to 0$ has a rate of convergence in $\sqrt\hb$ in the sense that
%$$
%\sup_{\substack{
%%f\\
%\|\d^\a_q\d^\b_pf\|_{L^\infty(\bR^{2d})}\leq 1\\
%|\a|+|\b|\leq 2[d/4]+3}}
%|\int_{\bR^{2d}}f(q,p)\left(W_\hb[R_\eps(t)]-\Phi^t\#W_0^{in}\right)(q,p)dqdp|
%\leq \sqrt{{C(t)}\eps+D(t)\hbar},
%$$
In particular, $W_0(t)$ is weakly $\sqrt\eps$-close to $\Phi^t\#W_0^{in}$ in the sense of distribution as, for all test functions $\varphi\in\cS(\bR^{2d})$,
$$
|\int_{\bR^{2d}}\varphi(q,p)\left(W_0(t)-\Phi^t\#W_0^{in}\right)(dp,dq)|
%=O_\varphi(
\leq C\varphi(t)\sqrt\eps
$$
with 
$$C_\varphi(t)=\max{\left(\int_{\bR^d}\sup\limits_{q\in\bR^d}|
%\int_{\bR^d}e^{-iz.\xi}\phi(x,\xi)d\xi
\cF_p\varphi(q,z)|dz,\max\limits_{|\a|+|\b|\leq 2[d/4]+3}\|\d^\a_q\d^\b_p\varphi\|_{L^\infty(\bR^{2d})}\right)}2^{-d}\sqrt{C(t)}.$$
\begin{color}{black}
\item 
Finally, 
$$
\MKd(W_0(t),\Phi^t\#W_0^{in})
\leq \sqrt{2^{-2d-1}C(t)}\sqrt\eps,
$$
where $\MKd$ is the Monge-Kantorovitch-Wasserstein distance  of order two, whose definition is recalled, for example, in \cite[Section 1]{GParma}.
\end{color}
\end{enumerate}

\end{Thm}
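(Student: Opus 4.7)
The plan is to reduce all three statements to the $\dis$-estimate of Theorem~\ref{main1} (in its mixed-state form Theorem~\ref{main1mix}) via a Wigner-triangle-inequality argument that brings in the classical limit \cite{lionspaul} for the unperturbed dynamics. Before anything else, I would establish uniform-in-$\hb$ tightness of $R_\eps(t)$ by Gronwall-type estimates on the second moments of position and momentum: bounded $\grad U$ and $\grad V$ yield Ehrenfest-type bounds that grow at most polynomially in $t$ with $\hb$-uniform constants, so the assumed tightness of $R_\eps^{in}$ propagates to all times and $W_0(t)$ exists as a weak limit.

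For part~(1), given a test function $f$ satisfying the two normalization conditions, I decompose
\begin{align*}
\int f\,(W_0(t) - \Phi^t\# W_0^{in})
&= \Bigl(\int f\, W_0(t) - \int f\, W_\hb[R_\eps(t)]\Bigr)\\
&\quad+ \Tr\bigl(\Op^W(f)(R_\eps(t)-R_0(t))\bigr)\\
&\quad+ \Bigl(\int f\, W_\hb[R_0(t)] - \int f\, \Phi^t\# W_0^{in}\Bigr),
\end{align*}
using in the middle line the Weyl correspondence $\int f\, W_\hb[R] = \Tr(\Op^W(f)R)$. As $\hb\to 0$, the first bracket vanishes by definition of $W_0(t)$ as the weak limit of $W_\hb[R_\eps(t)]$, and the third by \cite{lionspaul} applied to the unperturbed dynamics, whose Hamiltonian vector field $\grad\bigl(\tfrac12(p^2+(1-\lambda)q^2)+\mu U\bigr)$ is Lipschitz. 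For the middle term, the identities $\cD_x \Op^W(f) = \Op^W(\partial_p f)$ and $\cD_{-i\hb\grad}\Op^W(f) = \Op^W(-\partial_q f)$ (up to sign conventions) convert the two normalizations on $f$ into bounds on the operator norm of $F=\Op^W(f)$ (via a Calder\'on--Vaillancourt-type estimate from the $L^1_z L^\infty_q$-norm of $\cF_p f$) and on the iterated commutator trace norms $\|\cD^\alpha_{-i\hb\grad}\cD^\beta_x F\|_1$. The definition of $\dis$ then gives $|\Tr(F(R_\eps(t)-R_0(t)))|\le \dis(R_\eps(t),R_0(t))$, and Theorem~\ref{main1mix} bounds this by $\sqrt{C(t)\eps+D(t)\hb}$. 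Letting $\hb\to 0$ yields the claimed $2^{-d}\sqrt{C(t)}\sqrt\eps$ bound, the $2^{-d}$ tracking the Weyl-quantization normalization. Part~(2) is then immediate: for $\varphi\in\cS(\bR^{2d})$, rescale by the maximum of the two seminorms appearing in (1), producing the stated $C_\varphi(t)\sqrt\eps$ bound.

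For part~(3) I would combine the estimate of~(1) with Kantorovich duality through a semiclassical Husimi regularization: convolving $W_0(t)$ and $\Phi^t\# W_0^{in}$ with a Gaussian of width $\sqrt\hb$ produces honest $L^1\cap L^\infty$ probability densities whose MKW$_2$ distance is controlled by pairings of the type appearing in~(1), while the Gaussian smoothing introduces only an $O(\hb)$ error that vanishes in the limit. The normalization $\sqrt{2^{-2d-1}C(t)}$ is consistent with the Weyl-quantization constants inherited from~(1) together with the squaring step in passing from $\MKd^{\,2}$ to $\MKd$.

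The main obstacle is part~(3): the pairing in~(1) tests against smooth functions with bounded derivatives, whereas the Wasserstein-$2$ cost $|x-y|^2$ is unbounded, so Kantorovich duality cannot be applied directly to the test-function class of~(1). One must either truncate the cost and control the tails by the uniform second-moment tightness established initially, or invoke a semiclassical Monge-Kantorovich formalism such as the quantum pseudo-distance machinery developed by Golse, Mouhot and Paul; in either case, tracking the exact constant $\sqrt{2^{-2d-1}C(t)}$ through the construction is the final bookkeeping hurdle.
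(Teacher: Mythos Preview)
Your overall architecture (triangle inequality splitting off the unperturbed semiclassical limit handled by \cite{lionspaul}, then an $\hb$-uniform bound on the perturbed-versus-unperturbed quantum evolutions, then $\hb\to 0$) matches the paper. There are, however, two places where your implementation diverges from the paper's and where your version has a genuine gap.

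\textbf{Part (1): the route through $\dis$ does not work as stated.} You want to bound $|\Tr(\Op^W(f)(R_\eps(t)-R_0(t)))|$ by $\dis(R_\eps(t),R_0(t))$, which requires $F=\Op^W(f)$ to satisfy $\|\cD^\alpha_{-i\hb\grad}\cD^\beta_x F\|_1\le 1$. But the hypotheses on $f$ are $L^\infty$ bounds on finitely many derivatives, and the Weyl quantization of such a symbol is bounded (Calder\'on--Vaillancourt), not trace class. So $\Op^W(\partial^\alpha_q\partial^\beta_p f)$ need not even lie in $\cL^1(\fH)$, and the step ``definition of $\dis$ gives $\ldots$'' fails. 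The paper avoids this by never passing through $\dis$: it defines the classical-side functional $\delta$ (supremum of $|\int f(W_\hb[R]-W_\hb[S])|$ over the same test class) and proves directly, via Lemma~\ref{lemtheor} (which packages results from \cite{FGJinPaul,FGMouPaul}), that $\delta\le MK_\hb+C_d\hb$. Theorem~\ref{main} then bounds $MK_\hb(R_0(t),R_\eps(t))^2$ by $e^{t\Lambda}MK_\hb(R^{in},R^{in})^2+\gamma(t)\eps$, and the first term is $O(\hb)$ by the $\Delta(R^{in})=O(\sqrt\hb)$ hypothesis. This is Proposition~\ref{propdelta}, and the $2^{-d}$ comes from the relation $C(t)=2^{2d+1}\gamma(t)$, not from a Weyl normalization.

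\textbf{Part (3): the paper's argument is much shorter than your sketch.} You propose Husimi regularization plus Kantorovich duality with truncation, and correctly flag the unbounded-cost obstacle. The paper simply invokes the lower-semicontinuity inequality $\MKd(\mu,\nu)\le\varliminf_{\hb\to 0}MK_\hb(R_\hb,S_\hb)$ whenever $W_\hb[R_\hb]\to\mu$ and $W_\hb[S_\hb]\to\nu$ in $\cS'$ (this is \cite[Theorem 2.3(2)]{FGMouPaul}), applied with $R_\hb=R_\eps(t)$, $S_\hb=R_0(t)$. Theorem~\ref{main} gives $MK_\hb(R_\eps(t),R_0(t))^2\le\gamma(t)\eps+O(\hb)$, so the liminf is at most $\sqrt{\gamma(t)\eps}=\sqrt{2^{-2d-1}C(t)\eps}$. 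You mention this ``quantum pseudo-distance machinery'' as an alternative; it is in fact the entire argument, and your Husimi route would be substantially more work for the same conclusion.

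Your tightness argument (second-moment/Ehrenfest bounds) is fine in spirit; the paper does it slightly differently, via a smooth cutoff $\chi_R$ and energy conservation rather than Gronwall on moments, but either approach works.
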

\begin{color}{black}
%\begin{Rmk}
%There is a third formulation of the statements $(2)-(3)$ in Theorem \ref{main3} using the Monge-Kantorovitch-Wasserstein distance $\MKd$ of order two.
%%,whose definition is recalled, for example, in \cite[]{}.
% 
%It reads
%$$
%\MKd(W_0(t),\Phi^t\#W_0^{in})(
%\leq \sqrt{C(t)}\sqrt\eps.
%$$
%\end{Rmk}
\vskip 1cm
To our knowledge, no existence result is known for the Liouville equation associated to a vector field whose components are in $L^\infty(\bR^{2d})$ and not a priori continuous and for general measure initial data - the only known to us result being the existence and uniqueness  in $L^1(\bR^{2d})\cap L^\infty(\bR^{2d})$ result recalled before, excluding concentration to trajectories of the ODE associated to the vector field (note that well posedness for this ODE has been proved in \cite{jabin} under the extra hypothesis  for the vector field to be in $H^{\frac34}$).

Our result includes the case of concentrating initial data, by taking for example a coherent state for $\psi^{in}$ which provides a Dirac mass as Wigner measure: 

\noindent if $\psi(x)=(\pi\hbar)^{-d/4}e^{-i\tfrac{p.x}\hb}e^{-\tfrac{(x-q)^2}{2\hbar}},\ (p,q)\in T^*\bR^d$, one shows easily that.
$$
W_\hb[|\psi\rangle\langle\psi|](x,\xi)=(\pi\hbar)^{-d}e^{-\frac{(x-q)^2+(\xi-p)^2}\hb}\longrightarrow\delta_{(q,p)}\mbox{ as }\hbar\to 0.
$$

The meaning of the Theorem \ref{main3} can be summarized by the following diagram:  one can ``regularizes" the Liouville equation associated to an $L^\infty$ perturbation of a Cauchy-Lipschitz vector field with a Wigner measure for initial data by the associated Heisenberg-von Neumann equation, one remains close to the unperturbed solution.

\begin{center}

%\bigskip
%\begin{tabular}{ccc}
%&{\fbox{\bf ``Schr\"odinger"}}&\\[5mm]
%$R_\hb^{in}$& {$\stackrel{0\to t}{\longrightarrow}$}& $R_\hb(t) $
%\\ [7mm]
%\scriptsize{${\hbar\to 0}$}\ \LARGE{$\downarrow$}\ \ \ \ \ \ \  & & \ \ \ \ \ \ \  \LARGE{{$\mathbf{\downarrow}$}}\ 
%\scriptsize{${\hbar\to 0}$} 
%\\ [7mm]
% $W_0^{in}:=\lim\limits_{\hbar\to 0}W_\hb[R_\hb^{in}]$&{$\stackrel{0\to t}{\longrightarrow}$}&
% $W_0(t):=\lim\limits_{\hbar\to 0}W_\hb[R_\hb(t)]=\Phi^t_{\eps=0}\#W_0^{in}+O(\sqrt\eps)$\\[3mm]
% &{\fbox{\bf ``Liouville"}}& 
%\end{tabular}
%\bigskip

\bigskip
\begin{tabular}{ccc}
&{\fbox{\bf ``Schr\"odinger"}}&\\[5mm]
$R_\hb^{in}$& {$\stackrel{0\to t}{\longrightarrow}$}& $R_\hb(t) $
\\ [7mm]
\scriptsize{${\hbar\to 0}$}\ \LARGE{$\downarrow$}\ \ \ \ \ \ \  & & \ \ \ \ \ \ \  \LARGE{{$\mathbf{\downarrow}$}}\ 
\scriptsize{${\hbar\to 0}$} 
\\ [7mm]
 $\begin{array}{rrl}W_0^{in}&:=&\lim\limits_{\hbar\to 0}W_\hb[R_\hb^{in}]\end{array}$&{$\stackrel{0\to t}{\longrightarrow}$}&
 \begin{color}{black}$\begin{array}{rrl}
 W_0(t)&:=&\lim\limits_{\hbar\to 0}W_\hb[R_\hb(t)]\nonumber\\
 &=&\Phi^t_{\eps=0}\#W_0^{in}+O(\sqrt\eps)\nonumber
 \end{array}$\end{color}\\[3mm]
 &{\fbox{\bf ``Liouville"}}& 
\end{tabular}
\bigskip

%\centerline{Diagram 1}

\centerline{ Semiclassical regularization of \begin{color}{black} rough Liouville equation $\eps$-close to Cauchy-Lipschitz\end{color} }

\bigskip
\bigskip
\end{center}

\end{color}
\begin{proof}
The propagation of tightness is proved as follows.

%on the same lines as the corresponding proof in \cite{filipa} (but not using Husimi function). 
%
%%We first remark that \eqref{tight1}-\eqref{tight2} is equivalent to saying that $W-\hb[R_\eps]$ is tight inphase-space.  Indeed, by
%By \eqref{margx}-\eqref{margxi},
%\begin{eqnarray}
%%\lim_{R \to +\infty} \sup_{\hb\in (0,1)} 
%\int_{\bR^d \setminus B_{R}^{(d)}} |\psi(x)|^2 \; dx &=&
%\int_{\bR^d \setminus B_{R}^{(d)}\times \bR^d} W_\hb[\psi](x,\xi) \; dxd\xi\nonumber\\
%\int_{\bR^d \setminus B_{R}^{(d)}} |\hat\psi(p)|^2 \; dp &=&
%\int_{\bR^d\times\bR^d \setminus B_{R}^{(d)} } W_\hb[\psi](x,\xi) \; dxd\xi\nonumber
%\end{eqnarray}
Let $\chi \in C^\infty(\bR^d)$, $0\leq \chi \leq 1$ such that $\chi(x)=0$ if $|x|<1/2$ and $\chi(x)=1$ if $|x|>1$, and define
$\chi_R(x):=\chi(x/R)$. Obviously
%$$
%\int_{\bR^d \setminus B_{R}^{(d)}\times \bR^d} W_\hb[\psi_\eps(t)](x,\xi) \; dxd\xi
%\leq\int_{\bR^d \times \bR^d} \chi_R(x)W_\hb[\psi_\eps(t)](x,\xi) \; dxd\xi.
%$$
$$
\int_{\bR^d \setminus B_{R}^{(d)}} |\psi_\eps(t)(x)|^2 \; dx\leq
\int_{\bR^d }\chi_R(x) |\psi_\eps(t)(x)|^2 \; dx
$$
Moreover, for some $C>0$,
$
\|\nabla \chi_R\|_{\infty} ,\|\Delta \chi_R\|_{\infty} \leq C/R^2
$, and 
$$
-\frac i\hb[\chi_R,\cH_\eps]=-\frac i\hb[\chi_R,-\tfrac{\hbar^2}2\Delta]
=
-i\hbar(\tfrac12\Delta\chi_R-i\nabla\chi_R\cdot\nabla).
$$
Therefore
\begin{eqnarray}
\partial_t
\int_{\bR^d } \chi_R(x)|\psi_\eps(t)(x)|^2 \; dx&=&
-i\hbar\int_{\bR^d }\bar\psi_\eps(t)(x)((\tfrac12\Delta\chi_R-i\nabla\chi_R.\nabla)\psi_\eps(t))(x)dx\nonumber\\
&=&\int_{\bR^d }( -i\hbar\tfrac12\Delta\chi_R(x)|\psi_\eps(t)|^2\nonumber\\
&&+\bar\psi_\eps(t)(x)\nabla\chi_R(x)\cdot(-i\hbar\nabla\psi_\eps(t)(x))dx,\nonumber
\end{eqnarray}
so that
\begin{eqnarray}
\partial_t
\int_{\bR^d } \chi_R(x)|\psi_\eps(t)(x)|^2 \; dx
&\leq&\hbar\frac C{2R^2}+\frac CR\|-i\hbar\nabla\psi_\eps(t)\|_{L^2(\bR^d)}\nonumber\\
&=&\hbar\frac C{2R^2}+2\frac CR
(\psi_\eps(t),\cH_0^{0,0}\psi_\eps(t))_{L^2(\bR^d)}\nonumber\\
&\leq&\hbar\frac C{2R^2}+2\frac CR\left(
(\psi_\eps(t),\cH_\eps^{\lambda,\mu}\psi_\eps(t))_{L^2(\bR^d)}+\mu\|\vu\|_\infty+\eps\|\vd\|_\infty\right)\nonumber\\
&=&\hbar\frac C{2R^2}+2\frac CR\left(
(\psi_\eps^{in},\cH_\eps^{\lambda,\mu}\psi_\eps^{in})_{L^2(\bR^d)}+\mu\|\vu\|_\infty+\eps\|\vd\|_\infty\right)\nonumber
\end{eqnarray}
and finally, for $t\in[0,T]$
\begin{eqnarray}
\int_{\bR^d } \chi_R(x)|\psi_\eps(t)(x)|^2 \; dx
&\leq&
\int_{\bR^d } \chi_R(x)|\psi_\eps^{in}(x)|^2 \; dx\nonumber\\
&&
+\left(\hbar\frac C{2R^2}+2\frac CR\left(
(\psi_\eps^{in},\cH_\eps^{\lambda,\mu}\psi_\eps^{in})_{L^2(\bR^d)}+\mu\|\vu\|_\infty+\eps\|\vd\|_\infty\right)\nonumber\right)T.
\end{eqnarray}
Therefore $\psi_\eps(t)$ satisfies \eqref{tight2ps} as soon as $\psi_\eps^{in}$ does.

Finally, let us remark that
\begin{eqnarray}
\int_{\bR^d \setminus B_{R}^{(d)}} |\widehat\psi(p)|^2 \; dp &\leq&
\frac1{R^2}
\int_{\bR^d }p^2 |\widehat\psi(p)|^2 \; dp \nonumber\\
&=&\frac2{R^2}(\psi_\eps(t),\cH_0^{0,0}\psi_\eps(t))_{L^2(\bR^d)}
\end{eqnarray}
and one concludes the same way.
\vskip 0.5cm
The rest of the Theorem is proved as follows.
\begin{enumerate}
\item One knows from \cite{lionspaul} that the convergence of Wigner functions to Wigner measure as $\hbar\to 0$ takes place in the dual of the set of test functions $f$ on $\bR^{2d}$ satisfying
\be\label{testf}
\int_{\bR^d}\sup\limits_{q\in\bR^d}|
%\int_{\bR^d}e^{-iz.\xi}f(x,\xi)d\xi
\cF_pf(q,z)|dz<\infty.
\ee
Since $\vu\in C^{1,1}$ one knows that, for such a test function, 
$$
\lim\limits_{\hbar\to 0}\int_{\bR^{2d}}f(x,\xi)W_\hb[\psi_0(t)](x,\xi)dxd\xi=\int_{\bR^{2d}}f(x,\xi)\Phi^t\#W_0(x,\xi)dxd\xi.
$$
On the other hand, we have the slight variant of Theorem \ref{main1}, proven also in Section \ref{proofmain}.
\begin{Prop}\label{propdelta}
Let $\delta$ be defined by \eqref{defdelta} below. Then
$$
\delta(W_\hb[\psi_0(t)],W_\hb[\psi_\eps(t)])\leq 2^{-d}\sqrt{C(t)\epsilon+D(t)\hbar},
$$
where $C(t),D(t)$ are the constants defined in Theorem \ref{main1}.
\end{Prop}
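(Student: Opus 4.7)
The plan is to pass from the operator-level estimate of Theorem \ref{main1}, namely $\dis(\psi_0(t),\psi_\eps(t))^2 \leq C(t)\eps + D(t)\hbar$, to the Wigner-function estimate of Proposition \ref{propdelta} via Weyl quantization. The factor $2^{-d}$ in the bound suggests proving an embedding of the form
\[
\delta(W_\hbar[R],W_\hbar[S]) \leq 2^{-d}\,\dis(R,S),
\]
from which the proposition follows by taking a square root of the inequality in Theorem \ref{main1}.

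The distance $\delta$ defined by \eqref{defdelta} is, consistently with the test class displayed in Theorem \ref{main3}(1), the supremum of $|\int f(q,p)(\mu-\nu)(q,p)\,dq\,dp|$ over phase-space test functions $f$ subject to $\int_{\bR^d} \sup_q|\cF_p f(q,z)|\,dz \leq 1$ and $\max_{|\a|+|\b|\leq 2[d/4]+3}\|\d^\a_q\d^\b_p f\|_\infty \leq 1$. Setting $F := \Op^W_\hbar(f)$, the standard Weyl-calculus identity $\int f\,W_\hbar[R]\,dq\,dp = \Tr(FR)$ rewrites
\[
\int f\,(W_\hbar[\psi_0(t)] - W_\hbar[\psi_\eps(t)])\,dq\,dp = \Tr\bigl(F\bigl(|\psi_0(t)\rangle\langle\psi_0(t)| - |\psi_\eps(t)\rangle\langle\psi_\eps(t)|\bigr)\bigr),
\]
so that bounding $\delta$ reduces to bounding this trace over admissible $F$.

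The next step uses the exact Weyl commutation identities $\cD_{x_j}\Op^W_\hbar(g) = \Op^W_\hbar(\d_{p_j}g)$ and $\cD_{-i\hbar\d_{x_j}}\Op^W_\hbar(g) = -\Op^W_\hbar(\d_{q_j}g)$ (exact because the Moyal bracket with a linear symbol coincides with the Poisson bracket), which give $\cD^\a_{-i\hbar\nabla}\cD^\b_x F = \pm\Op^W_\hbar(\d^\a_q\d^\b_p f)$. A Calder\'on--Vaillancourt--type trace-class estimate
\[
\|\Op^W_\hbar(g)\|_1 \leq 2^d\int_{\bR^d}\sup_q|\cF_p g(q,z)|\,dz,
\]
combined with $L^\infty$-control on a sufficient number of derivatives $\d^\a_q\d^\b_p g$, then yields $\max_{|\a|,|\b|\le 2[d/4]+3}\|\cD^\a_{-i\hbar\nabla}\cD^\b_x F\|_1 \leq 2^d$ whenever $f$ satisfies the two constraints defining $\delta$. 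Equivalently, $2^{-d}F$ is an admissible test observable in \eqref{defd}, and hence $2^d\,\delta(W_\hbar[R],W_\hbar[S]) \leq \dis(R,S)$.

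Combining with Theorem \ref{main1} gives
\[
\delta(W_\hbar[\psi_0(t)],W_\hbar[\psi_\eps(t)]) \leq 2^{-d}\dis(\psi_0(t),\psi_\eps(t)) \leq 2^{-d}\sqrt{C(t)\eps + D(t)\hbar},
\]
as claimed. The main obstacle is pinning down the precise constant $2^d$ in the trace-class bound for $\Op^W_\hbar(g)$; this traces back to the $\hbar y/2$ normalization used in the Wigner transform \eqref{defwwig} and is handled by a direct kernel computation based on $F(x,x') = (2\pi\hbar)^{-d}\,\cF_p f(\tfrac{x+x'}{2},(x'-x)/\hbar)$, presumably packaged as a preliminary lemma in Section \ref{proofmain}.
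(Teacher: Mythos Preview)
Your argument runs in the wrong direction. The inequality you try to establish, $\delta(W_\hbar[R],W_\hbar[S])\le 2^{-d}\dis(R,S)$, is the reverse of what actually holds: Lemma~\ref{lemtheor} states $\dis(R,S)\le 2^d\delta(W_\hbar[R],W_\hbar[S])$, and in the paper the logical flow is $MK_\hb\to\delta\to\dis$, not $\dis\to\delta$. In particular, Theorem~\ref{main1} is obtained \emph{from} the $\delta$-bound (hence from Proposition~\ref{propdelta}), so invoking Theorem~\ref{main1} to prove Proposition~\ref{propdelta} is circular. The paper instead bounds $\delta$ directly by the second inequality in Lemma~\ref{lemtheor}, namely $\delta\le MK_\hb+C_d\hbar$, and then controls $MK_\hb(R_0(t),R_\eps(t))$ via Theorem~\ref{main} together with $MK_\hb(R^{in},R^{in})^2\le D'\hbar$ (Lemma~\ref{lemdelta}); the constants $C(t),D(t)$ in \eqref{cdet}--\eqref{ddet} are \emph{defined} precisely so that both the $\dis$- and the $\delta$-inequalities come out simultaneously.

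There is also a concrete analytic gap. You add the constraint $\int_{\bR^d}\sup_q|\cF_p f(q,z)|\,dz\le 1$ to the definition of $\delta$, but \eqref{defdelta} imposes only $\max_{|\a|+|\b|\le 2[d/4]+3}\|\d^\a_q\d^\b_p f\|_{L^\infty}\le 1$. With that class of test functions the Weyl quantization $\Op^W_\hbar(f)$ need not be trace class at all (take $f\equiv 1$), so no inequality of the form $\|\cD^\a_{-i\hbar\nabla}\cD^\b_x\Op^W_\hbar(f)\|_1\le 2^d$ can hold. Even granting your extra Fourier constraint, the bound you quote is an \emph{operator-norm} estimate (it is exactly the condition used in \cite{lionspaul} for boundedness), not a trace-norm estimate; a kernel computation shows $\int\sup_q|\cF_p f(q,z)|\,dz$ controls $\sup_x\int|K(x,x')|\,dx'$, not $\|K\|_{\cL^1}$. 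The route that does work is the one in the paper: compare $\delta$ to $MK_\hb$ (via \cite[Theorem~2.3(2)]{FGMouPaul}), where the cost operator $C$ is what links phase-space derivatives to the quantum side, and absorb the residual $C_d\hbar$ into $D(t)$.
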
 

Proposition \ref{propdelta} tells us that, for any $f$ satisfying 
$$\max\limits_{|\a|+|\b|\leq 2[d/4]+3}\|\d^\a_q\d^\b_pf\|_{L^\infty(\bR^{2d})}\leq 1,$$
$$
|\int_{\bR^{2d}}f(q,p)(W_\hb[\psi_\eps(t)]-W_\hb[\psi_0(t)])(dpdq)|\leq2^{-d}\sqrt{C(t)\eps+D(t)\hbar}
$$
Hence for any $f$ satisfying 
$$\int_{\bR^d}\sup\limits_{q\in\bR^d}|
%\int_{\bR^d}e^{-iz.\xi}f(x,\xi)d\xi
\cF_pf(q,z)|dz\leq 1,\ \max\limits_{|\a|+|\b|\leq 2[d/4]+3}\|\d^\a_q\d^\b_pf\|_{L^\infty(\bR^{2d})}\leq 1,$$
we have
\begin{eqnarray}
|\int_{\bR^{2d}}f(q,p)(W_\hb[\psi_\eps(t)]-\Phi^t\#W_0(x,\xi)])(dpdq)|&&\nonumber\\
\leq2^{-d}\sqrt{C(t)\eps+D(t)\hbar}
+
|\int_{\bR^{2d}}f(q,p)(W_\hb[\psi_0(t)]-\Phi^t\#W_0(x,\xi))(dpdq)|&&\nonumber
\end{eqnarray}
and we conclude by taking first the supremum on the functions $f$ and then the limit $\hbar\to 0$ on both sides.
\item The proof is obvious by homogeneity.
\item 
\begin{color}{black}
Since functions in the Schwartz class satisfy \eqref{testf}, one knows that, as $\hbar\to 0$,  $W_\hb[\psi_\eps(t)]\to W_0(t)$ and $W_\hb[\psi_0(t)]\to \Phi^t\#W_0^{in}$ both in $\cS'(\bR^{2d})$. Therefore, by \cite[Theorem 2.3. (2)]{FGMouPaul}, one knows  that
$$
\MKd(W_0(t),\Phi^t\#W_0^{in})\le\varliminf_{\hbar\to 0}MK_\hb(|\psi_\eps(t)\rangle\langle\psi_\eps(t)|,|\psi_0(t)\rangle\langle\psi_0(t)|)\,.
$$
By Theorem \ref{main} with $R_0^{in}=R_\eps^{in}=|\psi_0^{in}\rangle\langle\psi_0^{in}|=|\psi_\eps^{in}\rangle\langle\psi_\eps^{in}|$, we have that
$$
MK_\hb(|\psi_\eps(t)\rangle\langle\psi_\eps(t)|,|\psi_0(t)\rangle\langle\psi_0(t)|)
\leq\sqrt{\gamma(t)\eps},
$$
where $\gamma(t)$ is defined in \eqref{defgam}.

We conclude by noticing that, as $\hbar\to 0$, $2^{2d+1}\gamma(t)\to C(t)$ defined in item $(1)$.
\end{color}
\end{enumerate}
\end{proof}
\section{The case of mixed states}\label{resultmix}
 Consider the family of  von Neumann equations, for $\eps\in[0,1]$,
\be\lb{vNeps}
i\hb\d_tR_\eps(t)=[\cH
%^{\lambda,\mu}
_\eps,R_\eps(t)]\,,\qquad R_\eps(0)=R^{in}_\eps\in\cD(\fH).\,
\ee
%with $R^{in}\in\cD_2(\fH)$. 

Let $R^{in}_\eps=R^{in}_0=R\in\cD_2(\fH)$ satisfy one of the five following hypothesis:
\begin{enumerate}[label=(\roman*)]
\item\label{idelta} $\Delta(R^{in})=O(\sqrt{\hbar})$ where the standard deviation $\Delta(R)$ is defined in Lemma \ref{lemdelta} below;
\item\label{iwigner} $\sqrt{R^{in}}$ satisfies, for some $C>0$,
$$
\sup_{\substack{|\beta_1|,\dots,|\beta_d|\leq 7}}
 |\prod_{m=1}^dD_{(x,\xi)}^{\beta_m}   W_\hb[\sqrt{R^{in}}](x,\xi)| 
 \leq \frac {C(2\pi\hbar)^{-\frac d2}}{((\xi^2+x^2)^2+d)^{\frac{10}4+3\epsilon}}\hfil\ \ \  \forall (x,\xi)\in\bR^{2d},
$$
where $W_\hb[\sqrt{R^{in}}]$ is the Wigner transform of $R^{in}$;
\item\label{ihermitte} $\sqrt{R^{in}}$ satisfies, for $C
%_a,C_b
>0$ and all $j\in\bN^{d}$,
\begin{enumerate}
\item \label{iaj}
$|(H_i,
%A
\sqrt{R^{in}}
 H_j)|\leq C
 %_a
 (2\pi\hbar)^{\frac d2} 
 %\hbar^d
 {\prod\limits_{1\leq l\leq d}|\hbar j_l+\frac12|^{-
%\frac32
%\frac12
%1
\frac34
-\epsilon}(|i_l-j_l|+1)^{-2-\epsilon}}$,

\item\label{ioaj}
$\sup\limits_{O\in\Omega_1}{|(H_i,\tfrac1{i\hbar}[O,
%A
\sqrt{R^{in}}
] H_j)|}\leq C
%_b
{(2\pi\hbar)^{\frac d2}}
%\hbar^d
%{|j|^{-\frac32-\epsilon}(|i-j|+1)^{-1-\epsilon}}. 
{\prod\limits_{1\leq l\leq d}|\hbar j_l+\frac12|^{-
%\frac32
\frac12
%\frac34
-\epsilon}(|i_l-j_l|+1)^{-1-\epsilon}}$,

\ where $\Omega_1=\{y_j,
%-
\pm\hbar
%\nabla
\partial_{y_j}\ on\ L^2(\bR^d,dy),\ j=1,\dots,d\}$ and the $H_js$ are the semiclassical Hermite functions;
\end{enumerate}
\item\label{itoplitz}  $R^{in}$ is a T\"oplitz operator;
\item\label{itoplitz2} there exist a T\"oplitz operator $T_F$ such that
$
T_F^{-\frac12}\sqrt{R^{in}}\ ,\sqrt{R^{in}}T_F^{-\frac12}$ and $ 
T_F^{-\frac12}\sqrt{R^{in}}\frac1{i\hbar}[O,\sqrt{R^{in}}T_F^{-\frac12}],\ 
O\in\Omega_1$ are bounded on $L^2(\bR^d)$.
\end{enumerate}
\begin{Thm}\label{main1mix}
For every $t\geq 0$, 
%there exists a constant $C(t)$ independent of $\hbar$ such that
$$
\dis(R_0(t),R_\eps(t))^2\leq C(t)\epsilon+D(t)\hbar,
$$
where $C(t),D(t)$ are  given by \eqref{cdet}-\eqref{ddet}, and are of the form
\begin{eqnarray}
C(t)&=&\frac{e^{|t|(1-\lambda+\mu\Lip(\nabla \vu))}-1}{1-\lambda+\mu\Lip(\nabla \vu)}
C_{\|\cH R^{in}\|_1,\|\vu\|_\infty,\|\vd\|_\infty,\|\nabla \vd\|_\infty}<\infty,\nonumber\\
D(t)&=&e^{|t|(1-\lambda+\mu\Lip(\nabla \vu))}
D_d
%\|\cH_0^{1,0}R^{in}\|_1,\|\vu\|_\infty,\|\vd\|_\infty,\|\nabla V2\|_\infty)
<\infty.\nonumber
\end{eqnarray}
\end{Thm}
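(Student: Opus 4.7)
My plan follows the dynamical coupling / quantum optimal transport framework of Golse--Mouhot--Paul, in three steps: (a) build a small-cost quantum coupling at $t=0$; (b) propagate it under a tensorized dynamics and run Gronwall; (c) pass from the resulting cost to a bound on $\dis$. For step (a), I choose a density operator $Q^{in}$ on $\fH \otimes \fH$ with both partial traces equal to $R^{in}$ and small quadratic phase-space cost
\begin{equation*}
\gamma(0) \;:=\; \Tr\bigl(Q^{in}\bigl[(x_1-x_2)^2 + (-i\hbar\nabla_{x_1}+i\hbar\nabla_{x_2})^2\bigr]\bigr) \;\le\; \kappa\hbar .
\end{equation*}
Each of the five hypotheses \ref{idelta}--\ref{itoplitz2} is crafted precisely to allow this: \ref{idelta} via the ``independent'' coupling built from $\sqrt{R^{in}}$ in its spectral basis, whose cost is essentially $2\Delta(R^{in})^2 = O(\hbar)$ by the assumption on the standard deviation; \ref{iwigner}--\ref{ihermitte} via a coupling built from the semiclassical Wigner or Hermite data; \ref{itoplitz}--\ref{itoplitz2} via the standard T\"oplitz / Husimi coupling.

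For step (b), I propagate $Q(t)$ under the tensorized von Neumann equation
\begin{equation*}
i\hbar\,\partial_t Q(t) \;=\; \bigl[\cH_\eps\otimes I_\fH + I_\fH\otimes \cH_0,\, Q(t)\bigr], \qquad Q(0) = Q^{in},
\end{equation*}
which preserves the coupling property, i.e.\ $\Tr_2 Q(t) = R_\eps(t)$ and $\Tr_1 Q(t) = R_0(t)$ for all $t$. Setting $\gamma(t) := \Tr(Q(t)[(x_1-x_2)^2 + (p_1-p_2)^2])$ with $p_j := -i\hbar\nabla_{x_j}$, a direct commutator computation gives: the kinetic and harmonic terms contribute at most $(1-\lambda)\gamma(t)$ (vanishing for $\lambda=1$ by symplectic invariance of the cost under the harmonic flow, consistent with the continuity of $z\mapsto(e^z-1)/z$ at $z=0$); the shared $C^{1,1}$ potential $\mu\vu$ contributes at most $\mu\Lip(\nabla\vu)\gamma(t)$ via Cauchy--Schwarz and the Lipschitz bound $|\nabla\vu(x_1)-\nabla\vu(x_2)|\le\Lip(\nabla\vu)|x_1-x_2|$; and $\eps\vd$, present only on the first factor, produces $\eps\,\nabla\vd(x_1)\cdot(p_2-p_1)$, controlled by Young's inequality as $\eps(\|\nabla\vd\|_\infty^2 + \gamma(t))$ up to rescaling. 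Gronwall then integrates the resulting differential inequality to $\gamma(t)\le D(t)\hbar + C(t)\eps$ with $C(t), D(t)$ of the stated exponential/linear shape.

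For step (c), I pass from $\gamma(t)$ to $\dis$ by duality. For any admissible $F$ in \eqref{defd}, write $\Tr(F(R_\eps(t)-R_0(t))) = \Tr_{12}((F\otimes I - I\otimes F)Q(t))$. A Taylor/interpolation-type expansion of $F\otimes I - I\otimes F$ as a sum of iterated commutators $\cD_{-i\hbar\nabla}^\alpha \cD_x^\beta F$ paired with monomials in $(x_1-x_2, p_1-p_2)$ reduces the estimate to a Calder\'on--Vaillancourt / Sobolev embedding in $\bR^{2d}$: the order $2[d/4]+3$ appearing in \eqref{defd} is exactly the Sobolev threshold needed for $L^\infty$ control in dimension $2d$. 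This produces $|\Tr(F(R_\eps(t)-R_0(t)))| \le 2^{-d}\sqrt{\gamma(t)}$, so $\dis(R_0(t), R_\eps(t))^2 \le 2^{-2d}\gamma(t) \le C(t)\eps + D(t)\hbar$ after taking the supremum.

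The delicate part will be the commutator estimate in step (b) with $\vd$ only $W^{1,\infty}$: no classical flow for $\cH_\eps$ exists and no pseudodifferential symbolic calculus applies, so one cannot approximate $\tfrac{1}{i\hbar}[\vd,\cdot]$ by $\{\vd,\cdot\}$. The method survives because the cost interacts with $\vd$ only through finite differences $\vd(x_1)-\vd(x_2)$, controlled pointwise by $\|\nabla\vd\|_\infty|x_1-x_2|$ even when $\nabla\vd$ is merely $L^\infty$. A secondary obstacle is matching the initial cost bound $\gamma(0)\le\kappa\hbar$ to each of the five disparate hypotheses \ref{idelta}--\ref{itoplitz2}, each of which requires its own individual coupling construction.
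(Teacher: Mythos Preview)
Your three-step skeleton (couple at $t=0$, propagate the coupling and run Gronwall on the transport cost, then compare $\dis$ to the cost) is exactly the paper's strategy, and your step~(a) matches the paper's treatment of the five hypotheses. Two later steps, however, deviate from the paper in ways that matter.

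In step~(b) the paper does \emph{not} close the $\eps\vd$ contribution by Young's inequality against $\gamma(t)$. It applies Cauchy--Schwarz to isolate the factors $\|(-i\hbar\partial_{q_j}\!\otimes\! I)Q(t)^{1/2}\|_2$ and $\|(I\!\otimes\!(-i\hbar\partial_{q_j}))Q(t)^{1/2}\|_2$, i.e.\ the kinetic energies of the marginals $R_0(t),R_\eps(t)$, and then bounds these by \emph{conservation of total energy}: $\Tr\bigl(R_j(t)^{1/2}\cH_0^{\lambda,0}R_j(t)^{1/2}\bigr)\le \Tr\bigl((R_j^{in})^{1/2}\cH_0^{\lambda,0}(R_j^{in})^{1/2}\bigr)+2\|V_j\|_{L^\infty}$. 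This is precisely the origin of the stated dependence of $C(t)$ on $\|\cH R^{in}\|_1$; your Young route gives a legitimate differential inequality but not the form of $C(t)$ asserted in the theorem. (A small slip in your last paragraph: since $\eps\vd$ sits on one tensor factor only, the commutator with the cost produces $\nabla\vd(x_1)\!\cdot\!(p_1-p_2)$, as you correctly wrote in step~(b), and \emph{not} a finite difference $\vd(x_1)-\vd(x_2)$.)

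In step~(c) there is a genuine gap. The paper does not attempt a direct ``Taylor expansion of $F\otimes I-I\otimes F$ in iterated commutators''; it invokes Lemma~\ref{lemtheor}, namely $\dis(R,S)\le 2^d\bigl(MK_\hb(R,S)+C_d\hbar\bigr)$, established through the Wigner picture and a Calder\'on--Vaillancourt estimate (citing \cite{FGJinPaul,FGMouPaul}). The additive $C_d\hbar$ is not cosmetic: it is what gets absorbed into $D(t)$ via \eqref{ddet}. Your claimed inequality $|\Tr(F(R_\eps(t)-R_0(t)))|\le 2^{-d}\sqrt{\gamma(t)}$ without this $\hbar$-correction is not established anywhere, and the heuristic commutator expansion you sketch does not obviously produce it.
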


\begin{Cor}\label{main2mix}
For every $t\ge 0$,
$$
\dis(R_0(t),R_\eps(t))\leq E(t)\epsilon^{\frac13}.
$$
%with
%$$
%E(t)=\min{(\sqrt{C(t)+D(t}),2|t|\|\vd\|^2)}.
%$$

\end{Cor}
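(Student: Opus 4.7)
The plan is to copy the proof of Corollary \ref{main2} over to the mixed-state setting, interpolating Theorem \ref{main1mix} against a Duhamel bound in trace norm that carries an explicit $1/\hb$. Since both ingredients are already stated at the density-operator level, no new analytic input beyond Theorem \ref{main1mix} should be required; what remains is essentially a post-processing step.

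First I would write the von Neumann equation \eqref{vNeps} in interaction picture with respect to $\cH_0$, which gives
\[
R_\eps(t)-R_0(t) \;=\; \frac{\eps}{i\hb}\int_0^t e^{-i(t-s)\cH_0/\hb}\,[\vd,R_\eps(s)]\,e^{i(t-s)\cH_0/\hb}\,ds,
\]
so that trace-norm isometry of unitary conjugation together with $\|R_\eps(s)\|_1=1$ yields $\|R_\eps(t)-R_0(t)\|_1\leq 2|t|\eps\|\vd\|_\infty/\hb$. Second, I would observe that the choice $\a=\b=0$ in the admissibility constraint appearing in the definition \eqref{defd} of $\dis$ forces $\|F\|_1\leq 1$, and hence $\|F\|_{\operatorname{op}}\leq\|F\|_1\leq 1$. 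This yields the pointwise comparison $\dis(R,S)\leq \|R-S\|_1$ between distances, and combined with the previous step produces the $\hb$-dependent bound
\[
\dis(R_0(t),R_\eps(t)) \;\leq\; \frac{2|t|\eps\|\vd\|_\infty}{\hb}.
\]

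The final step is to balance this against the quadratic bound $\dis(R_0(t),R_\eps(t))^2\leq C(t)\eps+D(t)\hb$ of Theorem \ref{main1mix} by thresholding at $\hb_*:=\eps^{2/3}$. If $\hb\leq\hb_*$, the quadratic bound gives $\dis\leq\sqrt{C(t)+D(t)}\,\eps^{1/3}$ (using $\eps\leq\eps^{2/3}$ for $\eps\in(0,1]$); whereas if $\hb>\hb_*$, the Duhamel bound gives $\dis\leq 2|t|\|\vd\|_\infty\,\eps^{1/3}$. Taking the larger of the two coefficients produces an $E(t)$ of the same form as in Corollary \ref{main2}, valid uniformly in $\hb\in(0,1]$.

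I do not expect any serious obstacle, since Theorem \ref{main1mix} already carries all of the technically delicate regularity analysis. The only minor care needed is to verify that the admissibility constraint in \eqref{defd} does control $\|F\|_{\operatorname{op}}$ (the verification above) and that the Duhamel estimate in trace norm is insensitive to the five different hypotheses \ref{idelta}--\ref{itoplitz2}, which it manifestly is since it only uses $\|R_\eps(s)\|_1=1$.
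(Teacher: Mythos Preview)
Your proposal is correct and follows essentially the same route as the paper's Section~\ref{proffcor}: a Duhamel trace-norm estimate (the paper's Proposition~\ref{proptrac}) combined with the comparison $\dis\le c\|\cdot\|_1$ and the threshold $\hb=\eps^{2/3}$. Your direct argument for $\dis(R,S)\le\|R-S\|_1$ via the $\alpha=\beta=0$ constraint in \eqref{defd} is in fact slightly sharper than the paper's Lemma~\ref{lema7}, which quotes the constant $2^d$ from an external reference; otherwise the two arguments coincide.
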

In Theorem \ref{main1mix} and Corollary \ref{main2mix}, the constants $C(t),D(t),E(t)$ are the same as in Theorem \ref{main1} and Corollary \ref{main2} after replacing 
%in $C(t)$ 
$(\psi^{in},\cH\psi^{in})$ by $\|\cH R^{in}\|_1$ in $C(t)$.
\vskip 1cm
%\section{Applications to the semiclassical approximation}\label{appsemclass}
%The
%estimates provided by the results of the two preceding sections do not require $\grad \vd$  to be Lipschitz continuous --- in other words, the classical dynamics underlying the quantum dynamics generated by $\cH_\eps,\ \eps>0$,  may fail to satisfy the assumptions of the Cauchy-Lipschitz theorem.
\section{Proof of Theorems \ref{main1} and \ref{main1mix}, and Proposition \ref{propdelta}}\label{proofmain}
For all $R,S\in\cD(\fH)$, we denote by $\cC(R,S)$ the set of couplings of $R$ and $S$, i.e.
$$
\cC(R,S):=\{Q\in\cD(\fH\otimes\fH)\text{ s.t. }\Tr_{\fH\otimes\fH}((A\otimes I+I\otimes B)Q)=\Tr_\fH(AR+BS)\}\,.
$$
We recall the definition of the pseudo-distance $MK_\hb$ (see Definition 2.2 in \cite{FGMouPaul}).

\begin{Def}
For each $R,S\in\cD_2(\fH)$, 
$$
MK_\hb(R,S):=\inf_{Q\in\cC(R,S)}\sqrt{\Tr_{\fH\otimes\fH}(Q^{1/2}CQ^{1/2})}\,,
$$
where
$$
C:=\sum_{j=1}^d((q_j\otimes I-I\otimes q_j)^2-\hb^2(\d_{q_j}\otimes I-I\otimes\d_{q_j})^2)\,.
$$
\end{Def}
 Theorem \ref{main1} and Proposition \ref{propdelta} are a consequence of  the following inequality, which controls the continuous dependence of the solution to the von Neumann equation in terms of the initial data and on the potential.

\begin{Thm}\label{main}
Let $R_\eps^{in}\in\cD_2(\fH)$ and $R_\eps(t)$ be the solution of \eqref{vNeps}, $\eps\in[0,1]$. Then, for each $t\in\bR,\eps\in[0,1]$, one has
%$$
%\ba
%MK_\hb(R_0(t),R_\eps(t))^2\le e^{|t|\Lambda(\grad \vu)}MK_\hb(R_0^{in},R_\eps^{in})^2
%\\
%+\vp\frac{e^{|t|\Lambda(\grad \vu)}-1}{\Lambda(\grad \vu)}\|\grad
%%(\vu-\vd)
%\vd\|_{L^\infty(\bR^d)}\sqrt{\Tr((R_0^{in})^{1/2}{\cH} (R_0^{in})^{1/2})+2\mu\|\vu\|_{L^\infty(\bR^d)}}
%\\
%+\vp\frac{e^{|t|\Lambda(\grad \vu)}-1}{\Lambda(\grad \vu)}\|\grad
%%(\vu-\vd)
%\vd\|_{L^\infty(\bR^d)}\sqrt{\Tr((R_\eps^{in})^{1/2}{\cH} (R_\eps^{in})^{1/2})+2(\mu\|\vu\|_{L^\infty(\bR^d)}+\vp\|\vd\|_{L^\infty(\bR^d)})}&\,,
%\ea
%$$
$$
\ba
MK_\hb(R_0(t),R_\eps(t))^2\le e^{|t|\Lambda(\grad \vu)}MK_\hb(R_0^{in},R_\eps^{in})^2
\\
+\vp\frac{e^{|t|\Lambda(\grad \vu)}-1}{\Lambda(\grad \vu)}\|\grad
%(\vu-\vd)
\vd\|_{L^\infty}\sqrt{\Tr((R_0^{in})^{1/2}{\cH} (R_0^{in})^{1/2})+2\mu\|\vu\|_{L^\infty}}
\\
+\vp\frac{e^{|t|\Lambda(\grad \vu)}-1}{\Lambda(\grad \vu)}\|\grad
%(\vu-\vd)
\vd\|_{L^\infty}\sqrt{\Tr((R_\eps^{in})^{1/2}{\cH} (R_\eps^{in})^{1/2})+2(\mu\|\vu\|_{L^\infty}+\vp\|\vd\|_{L^\infty})}&\,,
\ea
$$
where
$$
\Lambda(\grad \vu)=1-\lambda+\mu\Lip(\grad \vu).
$$
\end{Thm}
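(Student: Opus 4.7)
The plan is to adapt the quantum optimal-coupling method developed in the Golse--Mouhot--Paul program: propagate an optimal coupling of the two initial states by a two-particle Heisenberg flow, and then apply a Gronwall-type argument for the cost functional $\Tr(CQ(t))$. Specifically, I would pick $Q^{in}\in\cC(R_0^{in},R_\eps^{in})$ realizing the infimum in $MK_\hb(R_0^{in},R_\eps^{in})^2$, and set
$$
i\hb\d_t Q(t)=[\cH_0\otimes I+I\otimes\cH_\eps,\,Q(t)],\qquad Q(0)=Q^{in}.
$$
The partial traces of $Q(t)$ satisfy the one-particle von Neumann equations driven respectively by $\cH_0$ and $\cH_\eps$, hence equal $R_0(t)$ and $R_\eps(t)$ at every time. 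Therefore $Q(t)\in\cC(R_0(t),R_\eps(t))$ and $MK_\hb(R_0(t),R_\eps(t))^2\le\Tr(CQ(t))=:E(t)$.

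Differentiating gives $E'(t)=\tfrac{1}{i\hb}\Tr([C,\cH_0\otimes I+I\otimes\cH_\eps]\,Q(t))$; I then split the generator as $(\cH_0\otimes I+I\otimes\cH_0)+\eps\,I\otimes \vd$. For the symmetric part, standard commutator algebra using $[Q_j,P_k]=i\hb\delta_{jk}I$ and the quadratic form of $C$ reduces $\tfrac{1}{i\hb}[C,\cH_0\otimes I+I\otimes\cH_0]$ to a cross-term proportional to $(Q_1-Q_2)\cdot(P_1-P_2)+\mathrm{h.c.}$ plus a potential-difference term involving $\grad \vu(Q_1)-\grad \vu(Q_2)$. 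Cauchy--Schwarz for traces together with the Lipschitz bound on $\grad \vu$ then yields $|\tfrac{1}{i\hb}\Tr([C,\cH_0\otimes I+I\otimes\cH_0]\,Q(t))|\le\Lambda(\grad \vu)\,E(t)$, the $(1-\lambda)$ piece coming from the mismatch between the $|P_1-P_2|^2$ term of $C$ and the harmonic confinement in $\cH_0$ (which cancels exactly at $\lambda=1$).

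For the perturbation piece, a direct computation gives
$$
\tfrac{1}{i\hb}[C,I\otimes \vd]=(P_1-P_2)\cdot\grad \vd(Q_2)+\grad \vd(Q_2)\cdot(P_1-P_2).
$$
Rather than closing by a $\sqrt{E(t)}$ bound (which would give the wrong exponential constant), I would use $|P_1-P_2|^2\le 2(P_1^2+P_2^2)$ and Cauchy--Schwarz to obtain
$$
\bigl|\eps\,\tfrac{1}{i\hb}\Tr([C,I\otimes \vd]Q(t))\bigr|\le 2\eps\|\grad \vd\|_\infty\Bigl(\sqrt{\Tr(P_1^2R_0(t))}+\sqrt{\Tr(P_2^2R_\eps(t))}\Bigr).
$$
Kinetic moments are then bounded uniformly in $t$ by writing $\tfrac12 P^2\le\cH$, expressing $\cH$ through $\cH_0$ and $\cH_\eps$, invoking the conservation laws $\Tr(\cH_0R_0(t))=\Tr(\cH_0R_0^{in})$ and $\Tr(\cH_\eps R_\eps(t))=\Tr(\cH_\eps R_\eps^{in})$, and absorbing $\mu \vu$ and $\eps \vd$ via their $L^\infty$ norms; this produces precisely the factors $\Tr(R^{1/2}\cH R^{1/2})+2\mu\|\vu\|_\infty$ and $\Tr(R^{1/2}\cH R^{1/2})+2(\mu\|\vu\|_\infty+\eps\|\vd\|_\infty)$ appearing in the statement.

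Putting the two bounds together yields a scalar linear inequality $E'(t)\le\Lambda(\grad \vu)\,E(t)+\eps\|\grad \vd\|_\infty M$ with a time-independent source $M$, which integrates to the stated estimate. The hard part will be the symmetric step: obtaining the \emph{sharp} constant $\Lambda(\grad \vu)=1-\lambda+\mu\Lip(\grad \vu)$, including the harmonic cancellation at $\lambda=1$, requires the operator-level cross-term algebra to be carried out so that no stray $\hb$-dependent remainder from $[Q,P]=i\hb I$ survives. This is the quantum analogue of the classical Gronwall $\tfrac{d}{dt}\bigl(|q_1-q_2|^2+|p_1-p_2|^2\bigr)\le 2\Lip(\grad H)\bigl(|q_1-q_2|^2+|p_1-p_2|^2\bigr)$, but the quadratic part of $\cH_0$ must interact with the quadratic cost $C$ exactly, which is the delicate point.
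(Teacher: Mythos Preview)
Your proposal is correct and follows essentially the same route as the paper: evolve an optimal coupling $Q^{in}$ by the two-body von Neumann flow, differentiate $\Tr(Q(t)^{1/2}CQ(t)^{1/2})$, split the potential contribution into the Lipschitz piece $\d_j V_1\otimes I-I\otimes\d_j V_1$ (bounded via the scaled operator inequality $A^*B+B^*A\le\ell^{-2}A^*A+\ell^2B^*B$ to get exactly $\Lip(\grad V_1)\,C$, with the harmonic $\tfrac\lambda2|x|^2$ cancelling against the kinetic cross-term to produce the $1-\lambda$) and the perturbation piece $I\otimes\d_j(V_1-V_2)$ (bounded by Cauchy--Schwarz against the conserved kinetic energies), then Gronwall. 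The only cosmetic differences are that the paper writes the cost as $\Tr(Q^{1/2}CQ^{1/2})$ rather than $\Tr(CQ)$ and uses the Hilbert--Schmidt triangle inequality $\|(P_1-P_2)Q^{1/2}\|_2\le\|P_1Q^{1/2}\|_2+\|P_2Q^{1/2}\|_2$ instead of your operator bound $|P_1-P_2|^2\le 2(P_1^2+P_2^2)$, which affects only an inessential numerical factor.
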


Note that, as mentioned before, when $\lambda=1$, $\Lambda(\grad \vu)=\mu\Lip(\grad \vu)$ and in the inequality above, the function
$$
z\mapsto\frac{e^z-1}{z}
$$
is extended by continuity at $z=0$.

\begin{proof} In order to lighten the formulas we will 
use the following notations
\be\label{defvuvd}
V_1=\mu\vu,\  V_2=\mu\vu+\eps\vd,
\ee
so that
$$\cH_0=\cH^{\lambda,0}_0+V_1\mbox{ and }\cH_\eps=\cH^{\lambda,0}_0+V_2.
$$
%and will consider the two following von Neumann equations
%
%present the proof for $\mu=\eps=1$, that is make the change $\mu V_1\to V_1,\  \eps V_2\to V_2$. Putting back $\vp$ in front of $V_1,V_2$ is straightforward.

Let $Q^{in}\in\cC(R_0^{in},R_\eps^{in})$, and let $Q$ be the solution of the von Neumann equation
$$
i\hb\d_tQ=[({\cH^{\lambda,0}_0}+V_1)\otimes I+I\otimes({\cH^{\lambda,0}_0}+V_2),Q]\,,\quad Q\rstr_{t=0}=Q^{in}\,.
$$
Then
$$
Q(t)\in\cC(R_0(t),R_\eps(t))\,,\quad\text{ for each }t\ge 0
$$
(see for instance Lemma 5.1 in \cite{FGMouPaul}). 

Next we compute
$$
\ba
\frac{d}{dt}\Tr_{\fH\otimes\fH}(Q(t)^{1/2}CQ(t)^{1/2})
\\
=\frac{i}\hb\Tr_{\fH\otimes\fH}(Q(t)^{1/2}[({\mathcal{H}^{\lambda,0}_0}+V_1)\otimes I+I\otimes({\mathcal{H}^{\lambda,0}_0}+V_2),C]Q(t)^{1/2})&\,.
\ea
$$

One finds that
$$
\frac{i}\hb[-\tfrac12\hb^2(\Dlt\otimes I+I\otimes\Dlt),C]=\sum_{j=1}^d(q_j\otimes I-I\otimes q_j)\vee(-i\hb(\d_{q_j}\otimes I-I\otimes\d_{q_j}))\,,
$$
while
$$
\frac{i}\hb[(V_1\otimes I+I\otimes V_2),C]=-\sum_{j=1}^d(\d_{q_j}V_1\otimes I-I\otimes\d_{q_j}V_2)\vee(-i\hb(\d_{q_j}\otimes I-I\otimes\d_{q_j}))\,,
$$
with the notation
$$
A\vee B:=AB+BA\,.
$$
In particular
$$
\frac{i}\hb[\tfrac12(|q|^2\otimes I+I\otimes|q|^2),C]=-\sum_{j=1}^d(q_j\otimes I-I\otimes q_j)\vee(-i\hb(\d_{q_j}\otimes I-I\otimes\d_{q_j}))\,.
$$
See \cite{FGMouPaul} on p. 190. Hence
$$
\ba
\frac{d}{dt}\Tr_{\fH\otimes\fH}(Q(t)^{1/2}CQ(t)^{1/2})
\\
-(1-\lambda)\Tr_{\fH\otimes\fH}\left(Q(t)^{1/2} \sum_{j=1}^d(q_j\otimes I-I\otimes q_j)\vee(-i\hb(\d_{q_j}\otimes I-I\otimes\d_{q_j}))Q(t)^{1/2}\right)\\
=\Tr_{\fH\otimes\fH}\left(Q(t)^{1/2}\sum_{j=1}^d(\d_{q_j}V_1\otimes I-I\otimes\d_{q_j}V_2)\vee(-i\hb(\d_{q_j}\otimes I-I\otimes\d_{q_j}))Q(t)^{1/2}\right)
\\
=\Tr_{\fH\otimes\fH}\left(Q(t)^{1/2}\sum_{j=1}^d(\d_{q_j}V_1\otimes I-I\otimes\d_{q_j}V_1)\vee(-i\hb(\d_{q_j}\otimes I-I\otimes\d_{q_j}))Q(t)^{1/2}\right)
\\
+\Tr_{\fH\otimes\fH}\left(Q(t)^{1/2}\sum_{j=1}^d(I\otimes\d_{q_j}(V_1-V_2)\vee(-i\hb(\d_{q_j}\otimes I-I\otimes\d_{q_j}))Q(t)^{1/2}\right)
\\
=:\tau_1+\tau_2&\,.
\ea
$$

At this point, we recall the elementary operator inequality
$$
A^*B+B^*A\le A^*A+B^*B\,.
$$
Therefore, 
$$
\sum_{j=1}^d(q_j\otimes I-I\otimes q_j)\vee(-i\hb(\d_{q_j}\otimes I-I\otimes\d_{q_j}))
\leq
C
$$
and, for each $\ell>\Lip(\grad V_1)^{1/2}$, one has
$$
\ba
\sum_{j=1}^d(\d_{q_j}V_1\otimes I-I\otimes\d_{q_j}V_1)\vee(-i\hb(\d_{q_j}\otimes I-I\otimes\d_{q_j}))
\\
=\sum_{j=1}^d\frac1\ell(\d_{q_j}V_1\otimes I-I\otimes\d_{q_j}V_1)\vee\ell(-i\hb(\d_{q_j}\otimes I-I\otimes\d_{q_j}))
\\
\le\sum_{j=1}^d\left(\frac{\Lip(\grad V_1)^2}{\ell^2}(q_j\otimes I-I\otimes q_j)^2+\ell^2(-i\hb(\d_{q_j}\otimes I-I\otimes\d_{q_j}))^2\right)&\,.
\ea
$$
Letting $\ell\to\Lip(\grad V)^{1/2}$ shows that
$$
\sum_{j=1}^d(\d_{q_j}V_1\otimes I-I\otimes\d_{q_j}V_1)\vee(-i\hb(\d_{q_j}\otimes I-I\otimes\d_{q_j}))\le\Lip(\grad V_1)C\,,
$$
so that
$$
\tau_1\le\Lip(\grad V_1)\Tr_{\fH\otimes\fH}(Q(t)^{1/2}CQ(t)^{1/2})\,.
$$

For the term $\tau_2$, we simply use the Cauchy-Schwarz inequality:
$$
\ba
\tau_2=\Tr_{\fH\otimes\fH}\left(Q(t)^{1/2}\sum_{j=1}^d(I\otimes\d_{q_j}(V_1-V_2)\vee(-i\hb(\d_{q_j}\otimes I-I\otimes\d_{q_j}))Q(t)^{1/2}\right)
\\
\le\sum_{j=1}^d\|Q(t)^{1/2}(I\otimes\d_{q_j}(V_1-V_2))\|_2||-i\hb(\d_{q_j}\otimes I-I\otimes\d_{q_j})Q(t)^{1/2}||_2
\\
\le\sum_{j=1}^d\|Q(t)^{1/2}(I\otimes\d_{q_j}(V_1-V_2))\|_2||(-i\hb\d_{q_j}\otimes I)Q(t)^{1/2}||_2
\\
+\sum_{j=1}^d\|Q(t)^{1/2}(I\otimes\d_{q_j}(V_1-V_2))\|_2+||(I\otimes(-i\hb\d_{q_j}))Q(t)^{1/2}||_2
\\
=\tau_{21}+\tau_{22}&\,.
\ea
$$
Now
$$
\ba
\tau_{21}\le\left(\sum_{j=1}^d\|Q(t)^{1/2}(I\otimes\d_{q_j}(V_1-V_2))\|^2_2\right)^{1/2}\left(\sum_{j=1}^d\|(-i\hb\d_{q_j}\otimes I)Q(t)^{1/2}\|^2_2\right)^{1/2}
\\
=\left(\sum_{j=1}^d\Tr\left(Q(t)^{1/2}(I\otimes\d_{q_j}(V_1-V_2))^2Q(t)^{1/2}\right)\right)^{1/2}
\\
\times\left(\Tr\left(Q(t)^{1/2}(-\hb^2\Dlt\otimes I)Q(t)^{1/2}\right)\right)^{1/2}
\ea
$$
so that
$$
\tau_{21}\le\|\grad(V_1-V_2)\|_{L^\infty(\bR^d)}\Tr(R_0(t)^{1/2}{\mathcal{H}^{\lambda,0}_0} R_0(t)^{1/2})^{1/2}\,,
$$
and likewise
$$
\tau_{22}\le\|\grad(V_1-V_2)\|_{L^\infty(\bR^d)}\Tr(R_\eps(t)^{1/2}{\mathcal{H}^{\lambda,0}_0} R_\eps(t)^{1/2})^{1/2}\,.
$$
Summarizing, we have proved that
$$
\ba
\frac{d}{dt}\Tr_{\fH\otimes\fH}(Q(t)^{1/2}CQ(t)^{1/2})\le
\Tr_{\fH\otimes\fH}(Q(t)^{1/2}CQ(t)^{1/2})\\
+\Lip(\grad V_1)\Tr_{\fH\otimes\fH}(Q(t)^{1/2}CQ(t)^{1/2})
\\
+\|\grad(V_1-V_2)\|_{L^\infty(\bR^d)}(\Tr(R_0(t)^{1/2}{\mathcal{H}^{\lambda,0}_0} R_0(t)^{1/2})^{1/2}\\
+\Tr(R_\eps(t)^{1/2}{\mathcal{H}^{\lambda,0}_0} R_\eps(t)^{1/2})^{1/2})&\,.
\ea
$$
On the other hand, since
$$
i\hb\d_tR_j(t)=[{\mathcal{H}^{\lambda,0}_0}+V_j,R_j(t)]\,,
$$
one has
$$
\Tr(R_j(t)^{1/2}({\mathcal{H}^{\lambda,0}_0}+V_j)R_j(t)^{1/2})=\Tr((R_j^{in})^{1/2}({\mathcal{H}^{\lambda,0}_0}+V_j)(R_j^{in})^{1/2})
$$
so that
$$
\Tr(R_j(t)^{1/2}{\mathcal{H}^{\lambda,0}_0} R_j(t)^{1/2})\le\Tr((R_j^{in})^{1/2}{\mathcal{H}^{\lambda,0}_0} (R_j^{in})^{1/2})+2\|V_j\|_{L^\infty(\bR^d)}\,.
$$
Hence
$$
\ba
\frac{d}{dt}\Tr_{\fH\otimes\fH}(Q(t)^{1/2}CQ(t)^{1/2})\le(1-\lambda+\Lip(\grad V_1))\Tr_{\fH\otimes\fH}(Q(t)^{1/2}CQ(t)^{1/2})
\\
+\|\grad(V_1-V_2)\|_{L^\infty(\bR^d)}\sqrt{\Tr((R_0^{in})^{1/2}{\mathcal{H}^{\lambda,0}_0} (R_0^{in})^{1/2})+2\|V_1\|_{L^\infty(\bR^d)}}
\\
+\|\grad(V_1-V_2)\|_{L^\infty(\bR^d)}\sqrt{\Tr((R_\eps^{in})^{1/2}{\mathcal{H}^{\lambda,0}_0} (R_\eps^{in})^{1/2})+2\|V_2\|_{L^\infty(\bR^d)}}&\,.
\ea
$$
By Gronwall's inequality, choosing $Q^{in}$ to be an optimal coupling of $R_0^{in}$ and $R_\eps^{in}$, one finds that, denoting $\Lambda(\grad V_1)=1-\lambda+\Lip(\grad V_1)$,
$$
\ba
MK_\hb(R_0(t),R_\eps(t))^2\le\Tr_{\fH\otimes\fH}(Q(t)^{1/2}CQ(t)^{1/2})\le e^{t\Lambda(\grad V_1)}MK_\hb(R_0^{in},R_\eps^{in})^2
\\
+\frac{e^{t\Lambda(\grad V_1)}-1}{\Lambda(\grad V_1)}\|\grad(V_1-V_2)\|_{L^\infty(\bR^d)}\sqrt{\Tr((R_0^{in})^{1/2}{\mathcal{H}^{\lambda,0}_0} (R_0^{in})^{1/2})+2\|V_1\|_{L^\infty(\bR^d)}}
\\
+\frac{e^{t\Lambda(\grad V_1)}-1}{\Lambda(\grad V_1)}\|\grad(V_1-V_2)\|_{L^\infty(\bR^d)}\sqrt{\Tr((R_\eps^{in})^{1/2}{\mathcal{H}^{\lambda,0}_0}{\mathcal{H}^{\lambda,0}_0} (R_\eps^{in})^{1/2})+2\|V_2\|_{L^\infty(\bR^d)}}&\,,
\ea
$$
which is the desired inequality by coming back to $\vu,\vd$ through \eqref{defvuvd} and using
$$
\|\mu\vu+\eps\vd\|\leq \mu\|\vu\|+\eps\|\vd\|. 
$$ 

%This proves Theorem \ref{main} in the case $\vp=1$ and it is completely trivial  to restore the $\vp$ dependence.
\end{proof}

\begin{proof}[Proof of Theorem \ref{main1} and Proposition \ref{propdelta}]
Observe that the estimate above is uniform in $\hb$ --- more precisely, the moduli of continuity in the initial data and in the potential are independent of $\hb$. Of course, the pseudo-distance $MK_\hb$ itself is not independent of $\hb$.
For $R,S\in\cD(\fH)$ let us define
\begin{eqnarray}\label{defdelta}
&&\delta(W_\hb[R],W_\hb[S]):=\\
&&
\sup_{\substack{
\max\limits_{|\a|+|\b|\leq 2[d/4]+3}\|\d^\a_q\d^\b_pf\|_{L^\infty(\bR^{2d})}\leq 1
%\\|\a|+|\b|\leq 2[d/4]+3
}}
|\int_{\bR^{2d}}f(q,p)(W_\hb[R],W_\hb[S])*q,p)dqdp|.\nonumber
\end{eqnarray}
\begin{Lem}\label{lemtheor}
For any $R,S\in\cD_2(\fH)$,
$$
\dis(R,S)\leq 2^d\delta(W_\hb[R],W_\hb[S])\leq 2^d(MK_\hb(R,S)+C_d\hbar)\ \mbox{ with }\ C_d=(1+\frac{\gamma_d}{\sqrt\pi})2d,
$$
where $\gamma_d\leq \frac{d^{3/4}(192e^{-\frac14}\pi^{-\frac54})^{ d}}{4e^{\frac14} }(d^d)^{11/4}$ is the constant appearing in the Caldaron-Vaillancourt theorem (see Appendix C in \cite{GPsemic}).
\end{Lem}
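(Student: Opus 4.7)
The lemma has two inequalities: $\dis(R,S)\le 2^d\delta(W_\hb[R],W_\hb[S])$ and $\delta(W_\hb[R],W_\hb[S])\le MK_\hb(R,S)+C_d\hb$. My plan is to treat them separately, both via Weyl-Wigner duality but exploited in different directions.

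For the first inequality, I would pass from the operator formulation to the symbol formulation: for any operator $F$ on $\fH$ with Weyl symbol $f$, one has the exact identity $\Tr(F(R-S))=\int f(q,p)(W_\hb[R]-W_\hb[S])dqdp$. Since $x_j$ and $-i\hb\d_{x_j}$ have linear Weyl symbols, the commutators $\cD_{x_j}=\frac1{i\hb}[x_j,\cdot]$ and $\cD_{-i\hb\d_{x_j}}=\frac1{i\hb}[-i\hb\d_{x_j},\cdot]$ correspond \emph{exactly} (no Moyal tail) to $\d_{\xi_j}$ and $-\d_{x_j}$ on symbols. Thus $\cD^\a_{-i\hb\grad}\cD^\b_{x}F$ has Weyl symbol $(-1)^{|\a|}\d^\a_x\d^\b_\xi f$. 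The pointwise representation of the Weyl symbol as a trace against the unitary parity operator $\Pi_{q,p}$ around $(q,p)$, namely $\sigma_W(G)(q,p)=2^d\Tr(G\,\Pi_{q,p})$, then gives $\|\d^\a_x\d^\b_\xi f\|_\infty\le 2^d\|\cD^\a_{-i\hb\grad}\cD^\b_{x}F\|_1\le 2^d$ for each multi-index pair $(\a,\b)$ with $|\a|,|\b|\le 2[d/4]+3$. Hence $f/2^d$ is admissible in the definition of $\delta$, and taking the supremum over admissible $F$ yields the first inequality.

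For the second inequality, I would let $f$ be admissible for $\delta$, let $F$ be its Weyl quantization on $\fH$, and pair against a coupling $Q\in\cC(R,S)$, producing $\int f(W_\hb[R]-W_\hb[S])dqdp=\Tr_{\fH\otimes\fH}((F\otimes I-I\otimes F)Q)$. The heart of the proof is an exact integral representation of the right-hand side via Taylor's formula applied at the symbol level: writing $f(q_1,p_1)-f(q_2,p_2)=\sum_j\int_0^1[(q_{1,j}{-}q_{2,j})\d_{q_j}f+(p_{1,j}{-}p_{2,j})\d_{p_j}f](q_s,p_s)ds$ with $(q_s,p_s)=s(q_1,p_1)+(1-s)(q_2,p_2)$, and quantizing on $L^2(\bR^{2d})$ using the Moyal rule for products with linear-in-phase-space factors (which gives the symmetric anti-commutator exactly), I obtain
$$
F\otimes I-I\otimes F=\tfrac12\sum_{j=1}^d\int_0^1\bigl[(Q_{1,j}{-}Q_{2,j})\vee G_j(s)+(P_{1,j}{-}P_{2,j})\vee H_j(s)\bigr]ds,
$$
where $G_j(s),H_j(s)$ are Weyl quantizations of $\d_{q_j}f(q_s,p_s),\d_{p_j}f(q_s,p_s)$. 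Calderón-Vaillancourt (Appendix C of \cite{GPsemic}) bounds $\|G_j(s)\|_{\mathrm{op}},\|H_j(s)\|_{\mathrm{op}}\le\gamma_d$. Cauchy-Schwarz in the form $|\Tr(AQ)|\le\|AQ^{1/2}\|_{HS}\|Q^{1/2}\|_{HS}=\|AQ^{1/2}\|_{HS}$ (using $\Tr Q=1$) converts the product part of each anti-commutator into $\sqrt{\Tr(Q(Q_{1,j}{-}Q_{2,j})^2)}$-type factors, while rewriting $A\vee B=2AB-[A,B]$ leaves an $O(\hb)$ commutator remainder controlled by the Moyal bracket. Summing over $j$, applying Cauchy-Schwarz on the $j$-sum to reassemble $C=\sum_j[(Q_{1,j}{-}Q_{2,j})^2+(P_{1,j}{-}P_{2,j})^2]$, and taking the infimum over couplings yields the desired bound with $C_d=(1+\gamma_d/\sqrt\pi)2d$: the factor $2d$ counts the position and momentum coordinates, while $\gamma_d/\sqrt\pi$ packages the Calderón-Vaillancourt norm together with the Gaussian moment normalization used in Appendix C.

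\textbf{Main obstacle.} The first inequality is essentially routine Weyl-Wigner duality. The crux lies in the second: bookkeeping two kinds of $\hb$ corrections --- the conversion of anti-commutators $A\vee B$ into products $AB$, paying $\hb\|[A,B]\|$, and the Moyal bracket contribution --- while simultaneously ensuring that the Weyl-quantized operators $G_j(s),H_j(s)$ on $L^2(\bR^{2d})$ admit Calderón-Vaillancourt estimates from the regularity $\|\d^\a f\|_\infty\le 1$ for $|\a|\le 2[d/4]+3$, despite their symbols living on the doubled phase space $\bR^{4d}$. Both bookkeeping tasks must combine to land exactly on the stated constant $C_d=(1+\gamma_d/\sqrt\pi)2d$.
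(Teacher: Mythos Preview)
The paper's proof is a bare citation of two external results (Theorem~A.7 in \cite{FGJinPaul} and Theorem~2.3~(2) in \cite{FGMouPaul}); it contains no argument of its own. Your proposal is therefore an attempted reconstruction rather than a direct comparison, and the two parts should be judged separately.

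Your treatment of the first inequality is correct and is almost certainly what Theorem~A.7 of \cite{FGJinPaul} contains. The identities $\cD_{x_j}\leftrightarrow\d_{p_j}$, $\cD_{-i\hb\d_{x_j}}\leftrightarrow -\d_{q_j}$ are exact for linear symbols, and the Grossmann--Royer parity representation $f(q,p)=2^d\Tr(F\Pi_{q,p})$ with $\|\Pi_{q,p}\|=1$ gives $\|\d^\a_q\d^\b_p f\|_\infty\le 2^d\|\cD^\a_{-i\hb\nabla}\cD^\b_x F\|_1$. Since the constraint in $\delta$ ($|\a|+|\b|\le 2[d/4]+3$) is contained in the constraint in $\dis$ ($|\a|,|\b|\le 2[d/4]+3$), the conclusion follows.

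For the second inequality there is a genuine gap. Carrying out your Cauchy--Schwarz step on $\tfrac12\Tr\big((A_j\vee G_j(s))Q\big)$ gives at best $\|G_j(s)\|\sqrt{\Tr(Q^{1/2}A_j^2Q^{1/2})}$, and after summing over $j$ and reassembling $C$ you obtain
\[
\big|\Tr\big((F\otimes I-I\otimes F)Q\big)\big|\ \le\ \gamma_d\sqrt{2d}\,\sqrt{\Tr(Q^{1/2}CQ^{1/2})}\,,
\]
hence a \emph{multiplicative} factor $\gamma_d\sqrt{2d}$ in front of $MK_\hb$, not the coefficient $1$ that the lemma asserts. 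Your proposed rewriting $A\vee B=2AB-[A,B]$ does not cure this: the commutator contributes an additional $O(\hb)$ term, but the product $2AB$ still carries the Calder\'on--Vaillancourt constant. No amount of bookkeeping on the doubled phase space will turn $\gamma_d$ into $1$ here, because Weyl quantization does not satisfy $\|\Op^W[g]\|\le\|g\|_\infty$.

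The shape of the stated bound --- coefficient $1$ on $MK_\hb$, additive $\hb$-remainder, and the factor $\gamma_d/\sqrt\pi$ inside $C_d$ --- points to a different mechanism, almost certainly the one behind Theorem~2.3~(2) of \cite{FGMouPaul}: pass to T\"oplitz (anti-Wick) quantization, for which $\|\Op^T[g]\|\le\|g\|_\infty$ holds with constant $1$, and relate Husimi transforms of $R,S$ to $MK_\hb$ via an explicit coupling. The Calder\'on--Vaillancourt constant then enters only when bounding the Weyl--T\"oplitz discrepancy $\Op^W[f]-\Op^T[f]=\Op^W[f-f*G_\hb]$, which is where the $O(\hb)$ and the Gaussian normalization $\sqrt\pi$ appear. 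Your approach, as written, does not produce this splitting and cannot recover the stated constants.
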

\begin{proof}
The proof consists in applying Theorem A.7 in \cite{FGJinPaul} and Theorem 2.3 (2) in \cite{FGMouPaul}.\end{proof}

\begin{color}{black}
Using Lemma \ref{lemtheor} and Theorem \ref{main} we get  that
$$
\begin{aligned}
(2^{-d}\dis(R_0(t),R_\eps(t))-C_d\hbar)^2\\
\leq (\delta(W_\hb[R_0(t)],W_\hb[R_\eps(t)])-C_d\hbar)^2\\
%\leq\MKd(\tilde W_\hb(R_0(t)),\tilde W_\hb(R_\eps(t)))^2
\leq MK_\hb(R_0(t),R_\eps(t))^2\\
\le e^{t\Lambda(\vp\grad V_1)}MK_\hb(R_0^{in},R_\eps^{in})^2
%+2d\hb
\\
+\vp\frac{e^{t\Lambda(\vp\grad V_1)}-1}{\Lambda(\grad V_1)}\|\grad(V_1-V_2)\|_{L^\infty(\bR^d)}\sqrt{\Tr((R_0^{in})^{1/2}{\mathcal{H}^{\lambda,0}_0} (R_0^{in})^{1/2})+2\vp\|V_1\|_{L^\infty(\bR^d)}}
\\
+\vp\frac{e^{t\Lambda(\vp\grad V_1)}-1}{\Lambda(\grad V_1)}\|\grad(V_1-V_2)\|_{L^\infty(\bR^d)}\sqrt{\Tr((R_\eps^{in})^{1/2}{\mathcal{H}^{\lambda,0}_0} (R_\eps^{in})^{1/2})+2\vp\|V_2\|_{L^\infty(\bR^d)}}&\,
\end{aligned}
$$
\begin{eqnarray}
&&:=e^{t\Lambda(\grad V_1)}MK_\hb(R_0^{in},R_\eps^{in})^2
%+2d\hb
+\gamma(t)\vp.\label{defgam}
\end{eqnarray}
%where $\MKd$ designates the Monge-Kantorovich, or Wasserstein, distance with exponent $2$, while $\tilde W_\hb$ designates the Husimi transform, and
%\be\label{defc}
%C=
%\ee
 Therefore Theorem \ref{main1} and Proposition  \ref{propdelta} are proven as soon as
 \be\label{condin}
MK_\hb(R^{in},R^{in})^2\leq D'\hbar
\ee
since then
\begin{eqnarray}
\dis(R_0(t),R_\eps(t))&\leq&2^d\delta(W_\hb[R_0(t)],W_\hb[R_\eps(t)])\nonumber\\
%&\leq&
%\sqrt{2^{2d}(e^{|t|\mu\Lip{\vu}}D'+2d)\hbar+2^{2d}\gamma(t)\eps}+2^{d}C_d\hbar\nonumber\\
&\leq&
\sqrt{2^{2d}e^{|t|\mu\Lip{(\nabla\vu)}}D'\hbar+2^{2d}\gamma(t)\eps}+2^{d}C_d\hbar\nonumber\\
&\leq&
\sqrt{C(t)\eps+D(t)\hbar}\nonumber
\end{eqnarray}
with, since $\hbar\leq 1$,
\begin{eqnarray}
C(t)&=&2^{2d+1}\gamma(t)
\label{cdet}\\
D(t)&=& e^{|t|\mu\Lip{(\nabla\vu)}}2^{2d+1}(D'(t)
%+2d
+C_d^2)\label{ddet}
\end{eqnarray}
\end{color}
\vskip 1cm
%If the initial data $R_j^{in}$ are T\"oplitz operators for $j=1,2$, specifically if
%$$
%R_j^{in}=\Op^T_\hb[(2\pi\hb)^d\mu_j]\,,\quad j=1,2
%$$
%with the notation of \cite{FGMouPaul} one can go further and apply Theorem 2.3 (1) in \cite{FGMouPaul}, to find that
%$$
%\ba
%\MKd(\tilde W_\hb(R_1(t)),\tilde W_\hb(R_2(t)))^2
%\\
%\le e^{t\Lambda(\grad V_1)}\MKd(\mu_1,\mu_2)^2+2d\hb\left(1+e^{t\Lambda(\grad V_1)}\right)
%\\
%+\vp\frac{e^{t\Lambda(\grad V_1)}-1}{\Lambda(\grad V_1)}\|\grad(V_1-V_2)\|_{L^\infty(\bR^d)}\sqrt{\Tr((R_1^{in})^{1/2}{\cH_\lambda} (R_1^{in})^{1/2})+2\vp\|V_1\|_{L^\infty(\bR^d)}}
%\\
%+\vp\frac{e^{t\Lambda(\grad V_1)}-1}{\Lambda(\grad V_1)}\|\grad(V_1-V_2)\|_{L^\infty(\bR^d)}\sqrt{\Tr((R_2^{in})^{1/2}{\cH_\lambda} (R_2^{in})^{1/2})+2\vp\|V_2\|_{L^\infty(\bR^d)}}&\,.
%\ea
%$$
\begin{Lem}\label{lemdelta}
For $R\in\cD_2(\fH)$ let
$$
\Delta(R):=\sqrt{\Tr{\big(R^\frac12(((x-\Tr{(R^\frac12 xR^\frac12)})^2
%R^\frac12\big)}
+
%\Tr{\big(R\frac12
(-i\nabla_x-\Tr{(R^\frac12 (-i\nabla_x)R^\frac12)})^2)R^\frac12\big)}
}.
$$
Note that when $R$ is a pure state i.e. $R=|\psi\rangle\langle\psi|$, $\Delta(R)$ is, modulo a slight abuse of notation, the same as in the definition \eqref{defdeltapsi}.

Then
$$
MK_\hb(R,R)\leq \sqrt2\Delta(R).
$$
Moreover, for any $\psi\in\fH$,
$$
MK_\hb(|\psi\rangle\langle\psi|,|\psi\rangle\langle\psi|)= \sqrt2\Delta(|\psi).
$$
\end{Lem}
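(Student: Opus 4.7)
My strategy is to exhibit an explicit coupling of $R$ with itself that makes $\Tr(Q^{1/2}CQ^{1/2})$ easy to compute. The natural candidate is the product coupling $Q:=R\otimes R$, which lies in $\cC(R,R)$ (both marginals are $R$) and belongs to $\cD_2(\fH\otimes\fH)$ since $R\in\cD_2(\fH)$. Because $Q$ is positive, cyclicity of the trace gives $\Tr(Q^{1/2}CQ^{1/2})=\Tr(QC)$, so the whole computation reduces to evaluating $\Tr\bigl((R\otimes R)(A\otimes I-I\otimes A)^2\bigr)$ for $A=q_j$ and $A=-i\hb\d_{q_j}$, $j=1,\dots,d$.

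Expanding $(A\otimes I-I\otimes A)^2=A^2\otimes I-2A\otimes A+I\otimes A^2$ and using the factorised structure of $R\otimes R$ together with $\Tr R=1$, one obtains
\[
\Tr\bigl((R\otimes R)(A\otimes I-I\otimes A)^2\bigr)=2\bigl(\Tr(RA^2)-\Tr(RA)^2\bigr),
\]
which is precisely twice the variance of $A$ in the state $R$. Summing this identity over $j=1,\dots,d$ and over the two choices $A=q_j,-i\hb\d_{q_j}$ reconstructs the sum of position and momentum variances, i.e.\ $2\Delta(R)^2$. Passing to the infimum over $\cC(R,R)$ yields $MK_\hb(R,R)^2\le 2\Delta(R)^2$. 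The one technical point is that $q_j$ and $-i\hb\d_{q_j}$ are unbounded, but the hypothesis $R\in\cD_2(\fH)$, i.e.\ $\Tr(R^{1/2}\cH R^{1/2})<\infty$, renders $\Tr(Rq_j^2)$ and $\Tr(-R\hb^2\d_{q_j}^2)$ finite and legitimises every trace above.

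For the equality claim in the pure state case $R=|\psi\rangle\langle\psi|$, I will prove that $R\otimes R$ is the \emph{unique} coupling in $\cC(R,R)$, which forces the infimum to be attained at this coupling. Starting from any spectral decomposition $Q=\sum_k\mu_k|\Psi_k\rangle\langle\Psi_k|$, the constraint that $\Tr_{\fH_2}Q=|\psi\rangle\langle\psi|$ has rank one, combined with the standard fact that the Schmidt rank of $\Psi_k$ equals the rank of its partial trace, forces every $\Psi_k$ with $\mu_k>0$ to take the product form $|\psi\rangle\otimes|\chi_k\rangle$. Hence $Q=|\psi\rangle\langle\psi|\otimes\sigma$ for some density $\sigma$, and the matching condition on the other marginal gives $\sigma=|\psi\rangle\langle\psi|$.

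I do not anticipate any serious obstacle. The only slight subtlety is the handling of the unbounded operators $q_j$ and $-i\hb\d_{q_j}$ inside the trace, and this is disposed of by the $\cD_2$ hypothesis. The main conceptual observation is simply that the independent coupling of a state with itself realises exactly the sum of position and momentum variances, which is precisely what $\Delta(R)^2$ encodes.
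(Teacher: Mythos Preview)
Your proposal is correct and follows essentially the same approach as the paper: use the product coupling $Q=R\otimes R$, expand the cost to recover twice the sum of position and momentum variances, and for pure states invoke the fact that $|\psi\rangle\langle\psi|\otimes|\psi\rangle\langle\psi|$ is the only element of $\cC(|\psi\rangle\langle\psi|,|\psi\rangle\langle\psi|)$. The only difference is that you supply a self-contained Schmidt-rank argument for this uniqueness, whereas the paper cites it from \cite{GParma}.
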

\begin{proof}
The  proof consists in remarking that $R\otimes R$ is indeed a coupling between $R$ and itself.  Therefore
\begin{eqnarray}
MK_\hb(R)^2
&\leq&
\Tr{\big((R^\frac12\otimes R^\frac12) C(R^\frac12\otimes R^\frac12)\big)}\nonumber\\
&=&
\Tr{\big((R^\frac12\otimes R^\frac12) (x^2\otimes I+I\otimes x^2-2x\otimes x)(R^\frac12\otimes R^\frac12)\big)}\nonumber\\
&&+\mbox{ same with }x\leftrightarrow -i\hbar\nabla_x\nonumber\\
&=&
\Tr{\big(R^\frac12 x^2 R^\frac12+R^\frac12x^2R^\frac12-2 R^\frac12xR^\frac12.R^\frac12xR^\frac12\big)}\nonumber\\
&&+\mbox{ same with }x\leftrightarrow -i\hbar\nabla_x\nonumber\\
&=&
2\Tr{\big(R^\frac12 x^2 R^\frac12- (R^\frac12xR^\frac12)^2\big)}\nonumber\\
&&+\mbox{ same with }x\leftrightarrow -i\hbar\nabla_x\nonumber\\
&=&2\Delta(R).\nonumber
\end{eqnarray}
The equality is proven the same way, after Lemma 2.1 (ii) in \cite{GParma} which stipulates that the only coupling between $|\psi\rangle\langle\psi|$ and itself is $|\psi\rangle\langle\psi|\otimes|\psi\rangle\langle\psi|$.
\end{proof}
This proves Theorem \ref{main1}, and Theorem \ref{main1mix} when $R^{in}$ satisfies  hypothesis \ref{idelta}. 

%The proof in the case of the two next hypothesis \ref{iwigner} and \ref{ihermitte} follows directly the first inequality of Thoerem 7.1 in \cite{GParma} with $R=S=R^{in}$, together with item $(I)$ (through the Corollary of Theorem 3.1 in \cite{GPsemic}) and item $(II)$ of Theorem 4.1 in \cite{GPsemic}, with $\mu(\hbar)=\mu'(\hbar)=1, \nu(\hbar)=\nu'(\hbar)=\sqrt\hb$. 
%
%This completes the proof of Theorem \ref{main1}.\end{proof}
%
%Let us finish this section by some final remarks.

If the initial data $R^{in}$ is a T\"oplitz operator, specifically if
$$
R^{in}=\Op^T_\hb[(2\pi\hb)^d\mu^{in}]\,,
$$
with the notation of \cite{FGMouPaul} one can go further and apply Theorem 2.3 (1) in \cite{FGMouPaul}:
$$
MK_\hb(R^{in},R^{in})\leq 2d\hbar,
$$
so that \eqref{condin} is again satisfied and Theorem \ref{main1mix} is proven when $R^{in}$ satisfies the hypothesis \ref{itoplitz}.

The proof in the case of the  hypothesis \ref{iwigner}, \ref{ihermitte} and \ref{itoplitz2} follows directly the first inequality of Theorem 8.1 in \cite{GParma} with $R=S=R^{in}$, together with item $(I)$ (through the Corollary of Theorem 3.1 in \cite{GPsemic}) and item $(II)$ of Theorem 4.1 in \cite{GPsemic}, with $\mu(\hbar)=\mu'(\hbar)=C,\ \nu(\hbar)=\nu'(\hbar)=C\sqrt\hb\mbox{ and } \tau(\hbar)=\hbar$, respectively. 

Indeed, \cite[Theorem 8.1, (iii) first inequality]{GParma} 
%in all these cases, \cite[Theorem 4.1]{GPsemic}) 
stipulates that, for all density matrix $R$,
$$
MK_\hb(R^{in},R^{in})\leq 2 E_\hb(\widetilde W_\hb[{R^{in}}],R^{in})
$$
where $E_\hb$ is a semiquantum pseudometric whose knowledge of the definition \cite[Definition 2.2]{FGPaul} is not strictly necessary for our purpose here since Theorem 4.1 in \cite{GPsemic}) shows that, when $\mu(\hbar)=\mu'(\hbar)=C,\ \nu(\hbar)=\nu'(\hbar)=C\sqrt\hb\mbox{ and } \tau(\hbar)=\hbar$,
$$
E_\hb(\widetilde W_\hb[{R^{in}}],R^{in})=O(\sqrt{\hbar}).
$$
Hence
$$
MK_\hb(R^{in},R^{in})^2\leq D'\hbar
$$
for some constant $D'$ explicitely recoverable from  \cite[Theorem 4.1]{GPsemic}.

This completes the proof of Theorems \ref{main1} and \ref{main1mix}.\end{proof}

\section{Proof of Corollaries \ref{main2} and \ref{main2mix}}\label{proffcor}
Let us first derive the easy standard following estimate.

\begin{Prop}\label{proptrac}
For every $t\in\bR$,
$$
\|R_0(t)-R_\eps(t)\|_1\leq 2t\frac\vp{\hbar}\|\vd\|_\infty
$$
\end{Prop}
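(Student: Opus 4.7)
The plan is to use the standard Duhamel formula for the difference of two von Neumann evolutions sharing the same initial datum $R^{in}$ but governed by Hamiltonians $\cH_0$ and $\cH_\eps=\cH_0+\eps\vd$. Writing
$$
\partial_t(R_\eps(t)-R_0(t))=\tfrac1{i\hb}[\cH_\eps,R_\eps(t)-R_0(t)]+\tfrac{\eps}{i\hb}[\vd,R_0(t)],
$$
and denoting $U_\eps(t):=e^{-it\cH_\eps/\hb}$ the unitary propagator of $\cH_\eps$, I would integrate this inhomogeneous Heisenberg-type equation by the variation of constants to get
$$
R_\eps(t)-R_0(t)=\tfrac{\eps}{i\hb}\int_0^t U_\eps(t-s)\,[\vd,R_0(s)]\,U_\eps(s-t)\,ds.
$$

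Then I would take the trace norm of both sides. Since $\|\cdot\|_1$ is invariant under conjugation by unitaries, the integrand is bounded in trace norm by $\frac{\eps}{\hb}\|[\vd,R_0(s)]\|_1$. Using the triangle inequality together with the Hölder-type bound $\|A B\|_1\le\|A\|_\infty\|B\|_1$ (and $\|BA\|_1\le\|A\|_\infty\|B\|_1$) with $A=\vd$ (multiplication operator, so $\|\vd\|_\infty$ is indeed its operator norm) and $B=R_0(s)$ (a density operator, hence $\|R_0(s)\|_1=1$) gives
$$
\|[\vd,R_0(s)]\|_1\le 2\|\vd\|_\infty.
$$
Substituting and integrating over $s\in[0,t]$ yields the claimed bound $\|R_0(t)-R_\eps(t)\|_1\le 2t\tfrac{\eps}{\hb}\|\vd\|_\infty$.

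There is no real obstacle here; indeed this is precisely the ``trivial estimate'' alluded to in the introduction, featuring exactly the undesirable $1/\hb$ factor whose removal motivates the finer estimates in $\dis$ established in Theorem \ref{main1} and used in Corollary \ref{main2}. The point of stating it is simply to have an $\hb$-independent but $\eps$-linear bound on the trace norm to compare against the $\eps^{1/3}$ bound in the weak distance, which is the combination exploited in the next proof.
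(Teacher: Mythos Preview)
Your proof is correct and follows essentially the same route as the paper: the Duhamel formula for the difference of the two von Neumann evolutions, followed by the commutator bound $\|[\vd,R_0(s)]\|_1\le 2\|\vd\|_\infty\|R_0(s)\|_1=2\|\vd\|_\infty$ and integration in $s$. Your presentation is in fact slightly cleaner, since you write the correct $R_0(s)$ inside the integral rather than the (presumably typographical) $R_0(t)$ appearing in the paper.
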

\begin{proof}
The solution of \eqref{vNeps} is explicitly given by
$$
R_\eps(t)=e^{-i\frac{t\cH_\eps}\hb}R^{in}e^{i\frac{t\cH_\eps}\hb}
$$
Therefore, one easily shows that
$$
R_0(t)-R_\eps(t)=\frac1{i\hbar}\int_0^t
e^{-i\frac{(t-s)\cH_\eps}\hb}[\eps\vd,R_0(t)]e^{i\frac{(t-s)\cH_\eps}\hb}.
$$ 
and the result follows from
$$
\|[\eps\vd,R_0(t)]\|_1\leq 2\eps\|\vd\|\|R_0(t)
\|_1.
$$
\end{proof}
Corollaries \ref{main2},\ \ref{main2mix} will follow by interpolation between Theorem \ref{main} and Proposition \ref{proptrac}, through the following inequality.

\begin{Lem}\label{lema7}
For any $R,S\in\cD_2(\fH)$,
$$
%2^{-d}
\dis(R,S)\leq 
%\min{(
2^{d}\|R-S\|_1
%,C_{\dis} MK_\hb(R,S))},
$$
%with
%$$
%C_{\dis}=
%.
%$$
\end{Lem}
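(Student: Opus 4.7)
The plan is to reduce the estimate to the elementary trace/operator-norm duality
$$
|\Tr(F(R-S))| \le \|F\|_{\mathrm{op}}\,\|R-S\|_1,
$$
valid for any bounded operator $F$ and trace-class operator $R-S$. Once this inequality is in hand, it suffices to show that every admissible test observable $F$ appearing in the supremum defining $\dis(R,S)$ in \eqref{defd} satisfies the operator-norm bound $\|F\|_{\mathrm{op}} \le 2^{d}$; taking the supremum over such $F$ then immediately gives $\dis(R,S) \le 2^{d}\|R-S\|_1$, which is the claim.

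To obtain the bound $\|F\|_{\mathrm{op}}\le 2^{d}$ from the admissibility condition
$$
\max_{|\alpha|,|\beta|\le 2[\tfrac{d}{4}]+3}\|\mathcal D^\alpha_{-i\hbar\nabla}\mathcal D^\beta_{x}F\|_{1}\le 1,
$$
I would proceed exactly as in the proof of Lemma \ref{lemtheor}: the iterated commutators $\mathcal D^\alpha_{-i\hbar\nabla}\mathcal D^\beta_{x}F$ correspond, under Weyl quantization, to the iterated partial derivatives $\partial^\alpha_{\xi}\partial^\beta_x \sigma_F$ of the symbol of $F$; the cut-off $|\alpha|+|\beta|\le 2[d/4]+3$ is precisely the order at which the Calder\'on--Vaillancourt theorem on $\bR^{2d}$ produces an operator bound, and the appearance of the factor $2^d$ reflects the normalization $\Tr = (2\pi\hbar)^{-d}\int\sigma(\cdot)\,dxd\xi$ in the Weyl calculus. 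Equivalently, this is the content of Theorem A.7 of \cite{FGJinPaul} already invoked in Lemma \ref{lemtheor} above, applied now in its ``operator-norm'' direction rather than its ``symbol $L^\infty$'' direction.

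The main (indeed only) obstacle is the bookkeeping that transforms the admissibility condition into a Calder\'on--Vaillancourt input with the precise constant $2^{d}$; I expect no new estimate is needed beyond what is already assembled in \cite{FGJinPaul} and recalled in the proof of Lemma \ref{lemtheor}. Hence the proof reduces to one line of Schatten duality followed by that quantitative Calder\'on--Vaillancourt citation.
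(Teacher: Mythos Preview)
Your overall strategy---Schatten duality $|\Tr(F(R-S))|\le\|F\|_{\mathrm{op}}\|R-S\|_1$ followed by an operator-norm bound on the admissible $F$---is correct, and you end up citing exactly the same Theorem~A.7 of \cite{FGJinPaul} that the paper's one-line proof invokes. So at the level of what is quoted, the approaches coincide.

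However, your explanation of \emph{how} that operator-norm bound arises is confused. The admissibility constraint in \eqref{defd} is a bound on the \emph{trace norms} $\|\mathcal D^\alpha_{-i\hbar\nabla}\mathcal D^\beta_x F\|_1$, not on the $L^\infty$ norms of the symbol derivatives $\partial^\alpha_\xi\partial^\beta_x\sigma_F$; the Calder\'on--Vaillancourt theorem goes from the latter to $\|F\|_{\mathrm{op}}$ and is simply not the right tool here. The argument is in fact far more elementary than you suggest: taking $\alpha=\beta=0$ in the admissibility constraint already gives $\|F\|_1\le 1$, whence $\|F\|_{\mathrm{op}}\le\|F\|_1\le 1$, and your duality step then yields $\dis(R,S)\le\|R-S\|_1$, which is even stronger than the stated bound. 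The factor $2^d$ is an artifact of the formulation in the cited reference and is not sharp from this viewpoint; your attempt to attribute it to the Weyl trace normalization or to the Calder\'on--Vaillancourt constant is off the mark.
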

\begin{proof}
This is  Theorem A7 in \cite{FGJinPaul}, item $(2)$ 
%and formula $(14)$ of Theorem 2.3.(1) in \cite{FGMouPaul} 
\end{proof}

Therefore, by Theorem \ref{main1} and Lemma \ref{lema7} we get, for each $\hbar,\vp\in[0,1]$,
$$
\dis(R_1(t),R_2(t))
\leq
\min{(\sqrt{C(t)\vp+D(t)\hbar},2|t|\frac\vp\hb\|\vd\|_\infty)}.
$$
%with
%$$
%C_2=.
%$$
Obviously, 
%for $\vp,\hbar\leq 1$,
$\min{(\sqrt{C(t)\vp+D(t)\hbar},2|t|\frac\vp\hb\|\vd\|_\infty)}\leq \min{(\sqrt{C(t)+D(t)},2|t|\|\vd\|_\infty)}\eps^\frac13$ for $\vp,\hbar\leq 1$,
since, when $\hbar\leq\vp^\frac23$, $\hbar,\eps\leq\eps^\frac23$, and, when $\hbar\geq\vp^\frac23$, $\frac\vp\hb\leq\vp^\frac13$.
%Therefore $\dis(R_1(t),R_2(t))\leq\sqrt2C_2\vp^\frac13$ and t
The Corollary is proved.

\section{Proof of Proposition \ref{crasprop}}\label{proofcrasprop}

The upper bound  is given simply by Lemma \ref{lemtheor} and the following inequality, proved in \cite[Section 4]{gpcoh}
$$
MK_\hb(|\psi_{z_1}\rangle\langle\psi_{z_1}|,|\psi_{z_2}\rangle\langle\psi_{z_2}|)^2\leq |z_1-z_2|^2+2d\hbar.
$$

For the lower bound, we will pick  a test operator in the form of a T\" oplitz operator with symbol $f\geq 0$:
$$
F:=\Op^T_\hb[(2\pi\hb)^df].
$$
One easily verifies that $\Tr{F}=\int_{\bR^{2d}}f(q,p)dqdp$.

Moreover, see \cite[Appendix B]{FGJinPaul},
\begin{eqnarray}
\tfrac1{i\hbar}[F,-i\hbar\partial_{x_j}]&=&
\Op^T_\hb[(2\pi\hb)^d\partial_{p_j}f],\ j=1,\dots,d,\nonumber\\
\tfrac1{i\hbar}[F,x_j]&=&
\Op^T_\hb[(2\pi\hb)^d(-\partial_{q_j}f)],\ j=1,\dots,d,\nonumber\\
\|F\|_{1}&\leq&\|f\|_{L^1(\bR^{2d})}.\nonumber
\end{eqnarray}
Therefore, it is easy to construct functions $f$ such that $F:=\Op^T_\hb[(2\pi\hb)^df]$ satisfies the constraints 
%of the test operators in 
of the maximization problem in the definition of  $\dis$.

Moreover, denoting $z=(q,p)\in\bR^{2d}$,
\begin{eqnarray}
\dis(|\psi_{z_1}\rangle\langle
\psi_{z_1}|,|\psi_{z_2}\rangle\langle\psi_{z_2}|)&\geq&
|\Tr{\left(F(|\psi_{z_1}\rangle\langle
\psi_{z_1}|-|\psi_{z_2}\rangle\langle\psi_{z_2}|)\right)}|\nonumber\\&=&|\langle\psi_{z_1}|F|\psi_{z_1}\rangle
-
\langle\psi_{z_2}|F|\psi_{z_2}\rangle|\nonumber\\
&=&
|\int_{\bR^{2d}}f(z)(|\langle\psi_z|\psi_{z_1}\rangle|^2
-
|\langle\psi_z|\psi_{z_2}\rangle|^2)\tfrac{dqdp}{(2\pi\hbar)^d}|\nonumber\\
&=&
|\int_{\bR^{2d}}f(z)
(e^{-\frac{|z-z_1|^2}{2\hbar}}-e^{-\frac{|z-z_2|^2}{2\hbar}})
\tfrac{dqdp}{(2\pi\hbar)^d}|\nonumber\\
&\geq& |f(z_1)-f(z_2)|\nonumber\\
&-&\tfrac2e{\hbar}\max_{\|\a|,|\b|\leq 2}\int_{\bR^{2d}}|\partial_q^\a\partial_p^\b f(q,p)|dqdp.\nonumber
\end{eqnarray}
%and 
%$$|\dis(|\psi_{z_1}\rangle\langle
%\psi_{z_1}|,|\psi_{z_2}\rangle\langle\psi_{z_2}|)-(f(z_1)-f(z_2))|
%\leq\tfrac2e{\hbar}\max_{\|\a|,|\b|\leq 2}\int_{\bR{2d}}|\partial_q^\a\partial_p^\b f(q,p)|dqdp\leq 2d\hbar.
%$$

Let us suppose now that $f\in\cS(\bR^{2d})$ and $f$ is convex in a convex  domain containing $z_1,z_2$. Then, one can certainly rescale, translate and rotate 
%in $\bR^{2d}$ 
$f$ such that
\begin{itemize}
\item 
$f(z_1)-f(z_2)\geq \nabla f(z_2).(z_1-z_2)$ \hfill{convexity}
\item 
$|f(z_1)-f(z_2)|\geq C|z_1-z_2|,\  C>0$\hfill{rotation and translation}
\item
$\max\limits_{\substack{|\a|,|\b| \le 2[\frac d4]+3}}\|\mathcal D^\a_{-i\hbar\nabla}\mathcal D^\b_{x}F\|_{1}\leq \max\limits_{\|\a|,|\b|\leq 2}\|\partial_q^\a\partial_p^\b f\|_{L^1(\bR^{2d})}\leq 1$\hfill{rescaling.}
\end{itemize}
Hence
$$
C|z_1-z_2|-\hbar\leq 
\dis(|\psi_{z_1}\rangle\langle
\psi_{z_1}|,|\psi_{z_2}\rangle\langle\psi_{z_2}|).
$$


\begin{thebibliography}{99}

\bibitem[AFFGP10]{AFFGP} { L.Ambrosio, A. Figalli, G. Friesecke, J. Giannoulis, T. Paul :}
{\em Semiclassical limit of quantum dynamics with rough potentials and well posedness of transport equations with measure initial data.} arXiv:1006.5388v1, (2010).

\bibitem[B28]{Bi} G. D. Birkhoff, ``Dynamical systems", 
American Mathematical Society Colloquium Publications, Vol. IX American Mathematical Society, Providence, R.I. (1966). 


\bibitem[B25]{B} M. Born, ``Vorlesungen \"uber Atommechanik", Springer, Berlin, (1925). English translation: ``The
mechanics of the atom", Ungar, New-York, (1927).

\bibitem[D91]{DGH}	
M. Degli Esposti, S. Graffi, J. Herczynski,  {\it Quantization of the classical Lie algorithm in the Bargmann representation}, Annals of Physics, {\bf 209} 2 (1991), 364-392.


%\bibitem[E81]{Eca81} J. \'Ecalle, 
%%
%\textit{Les fonctions r\'esurgentes},
%%
%Publ. Math. d'Orsay [Vol.~1: 81-05, Vol.~2: 81-06, Vol.~3: 85-05]
%%
%1981, 1985.

%\bibitem[GMP16]{FGMouPaul}
%F. Golse, C. Mouhot, T. Paul:
%\textit{On the Mean Field and Classical Limits of Quantum Mechanics},
%Commun. Math. Phys. \textbf{343} (2016), 165--205.

\bibitem[FLP13]{filipa} A. Figalli, M. Ligab\`o, T. Paul, {\it Semiclassical limit for mixed states
with singular and rough potentials,  Indiana University Mathematics Journal, \textbf{61}, (2013), 193-222.\smallskip\noindent}

\bibitem[GJP20]{FGJinPaul} F. Golse, S. Jin, T. Paul: \textit{On the convergence of the time splitting methods for quantum dynamics},  Foundations of Computational Mathematics · DOI: 10.1007/s10208-020-09470-z (2020).\smallskip\noindent

\bibitem[GMP16]{FGMouPaul}
F. Golse, C. Mouhot, T. Paul:
\textit{On the Mean Field and Classical Limits of Quantum Mechanics},
Commun. Math. Phys. \textbf{343} (2016), 165--205.

\bibitem[GP17]{FGPaul}
F. Golse, T. Paul:
\textit{The Schr\"odinger Equation in the Mean-Field and Semiclassical Regime},
Arch. Rational Mech. Anal. \textbf{223} (2017), 57--94.

\bibitem [GP18]{gpcoh}F. Golse,  T. Paul:
\textit{Wave Packets and the Quadratic Monge-Kantorovich Distance in Quantum Mechanics}, Comptes Rendus Mathematique \textbf{356} 177 - 197 (2018).\smallskip\noindent

\bibitem[GP20a]{GPsemic}
F. Golse,  T. Paul:
\textit{Semiclassical evolution with low regularity }, J. Math. Pures et Appl. (2021) DOI:10.1016/j.matpur.2021.02.008


\bibitem[GP20b]{GParma}
F. Golse,  T. Paul:
\textit{Quantum and Semiquantum Pseudometrics and applications}, preprint arXiv:2102.05184 [math.AP].


\bibitem[G87]{GP1} S. Graffi, T.Paul, {\it Schr\"odinger equation and canonical perturbation theory}, Comm. Math. Phys., {\bf 108}, 25-40 (1987).

\bibitem[H25]{H} W. Heisenberg, {\it Matrix mechanik}, Zeitscrift f\"ur Physik, {\bf 33}, 879-893 (1925).

\bibitem[CJ10]{jabin}N. Champagnat, P-E. Jabin : {\em Well Posedness in any Dimension for Hamiltonian
Flows with Non BV Force Terms}, Communications in Partial Differential Equations {\bf 35} (2010), 786-816

\bibitem[LP93]{lionspaul} { P.L.Lions, T.Paul:} {\em Sur les mesures de Wigner.}
Rev. Mat. Iberoamericana {\bf 9} (1993), 553--618.

%\bibitem[FLP13]{filipa} A. Figalli, M. Ligab\`o, T. Paul, {\it Semiclassical limit for mixed states
%with singular and rough potentials,  Indiana University Mathematics Journal, \textbf{61}, (2013), 193-222.\smallskip\noindent}

\bibitem[NPST18]{NPST}  J-C. Novelli, T. Paul, D. Sauzin, J-Y. Thibon: {\it Rayleigh-Schr\"odinger series and Birkhoff decomposition},  Letters in Mathematical Physics \text{108} 1583-1600 (2018).\smallskip\noindent

\bibitem[P16]{Dyn} T. Paul, D. Sauzin, {\it Normalization in Lie algebras via mould calculus and applications},  Regular and Chaotic Dynamics \textbf{22} (6) 616-649 (2017), special issue in memory of  Vladimir Arnold.\smallskip\noindent.

\bibitem[P162]{Dyn2} T. Paul, D. Sauzin, {\it Normalization in Banach scale Lie algebras via mould calculus and applications},   Discrete and Continuous Dynamical Systems A \textbf{37}  4461 - 4487 (2017).\smallskip\noindent


\bibitem[P1892]{P} H. Poincar\'e, ``Les m\'ethodes nouvelles de la m\'ecanique 
c\'eleste", Volume 2, Gauthier-Villars, Paris, (1892), Blanchard, Paris, (1987).
\end{thebibliography}
\end{document}